\tikzset{snake it/.style={decorate, decoration=snake}}
\theoremstyle{plain}
\newtheorem{thm}{Theorem}[section]
\newtheorem{cor}[thm]{Corollary}
\newtheorem{lem}[thm]{Lemma}
\newtheorem{prop}[thm]{Proposition}
\newtheorem{conj}[thm]{Conjecture}
\theoremstyle{definition}
\theoremstyle{remark}
\newtheorem{rmk}[thm]{Remark}
\newcommand{\BC}{{\mathbb{C}}}
\newcommand{\BF}{{\mathbb{F}}}
\newcommand{\BP}{{\mathbb{P}}}
\newcommand{\BQ}{{\mathbb{Q}}}
\newcommand{\BZ}{{\mathbb{Z}}}
\newcommand{\CE}{{\mathcal E}}
\newcommand{\CF}{{\mathcal F}}
\newcommand{\CO}{{\mathcal O}}
\newcommand{\CP}{{\mathcal P}}
\newcommand{\CU}{{\mathcal U}}
\newcommand{\sslash}{\mathbin{/\mkern-6mu/}}
\DeclareFontFamily{OT1}{rsfs}{}
\DeclareFontShape{OT1}{rsfs}{n}{it}{<-> rsfs10}{}
\DeclareMathAlphabet{\curly}{OT1}{rsfs}{n}{it}
\newcommand\Hom{\operatorname{Hom}}
\begin{document}
\title[Perverse, Chern, and refined BPS invariants]{Perverse filtrations, Chern filtrations, and refined BPS invariants for local $\BP^2$}
\date{\today}

\author[Y. Kononov]{Yakov Kononov}
\address{Yale University}
\email{yakov.kononov@yale.edu}

\author[W. Pi]{Weite Pi}
\address{Yale University}
\email{weite.pi@yale.edu}

\author[J. Shen]{Junliang Shen}
\address{Yale University}
\email{junliang.shen@yale.edu}

\begin{abstract}
We explore connections between three structures associated with the cohomology of the moduli of 1-dimensional stable sheaves on $\BP^2$: perverse filtrations, tautological classes, and refined BPS invariants for local $\BP^2$. We formulate the $P=C$ conjecture identifying the perverse filtration with the Chern filtration for the free part of the cohomology. This can be viewed as an analog of de Cataldo--Hausel--Migliorini's $P=W$ conjecture for Hitchin systems. Our conjecture is compatible with the enumerative invariants of local $\BP^2$ calculated by refined Pandharipande--Thomas theory or Nekrasov partition functions. It provides a cohomological lift of a conjectural product formula of the asymptotic refined BPS invariants. We prove the $P=C$ conjecture for degrees $\leq 4$.
\end{abstract}

\maketitle

\setcounter{tocdepth}{1} 

\tableofcontents
\setcounter{section}{-1}

\section{Introduction}


We work over the complex numbers $\BC$.

\subsection{Refined BPS invariants for local $\BP^2$}
Let $X = \mathrm{Tot}(K_{\BP^2})$ be the Calabi--Yau 3-fold given by the total space of the canonical bundle on $\BP^2$, which is referred to as the ``local $\BP^2$''. Let $d\geq 3$ be an integer. Considerations from physics predict that there is an action of $\mathfrak{sl}_2 \times \mathfrak{sl}_2$ on the cohomology of a certain moduli space of $D$-branes supported on a degree $d$ curve in $X$ \cite{GV}; this yields double indexed integral invariants
\begin{equation}\label{BPS}
n_d^{i,j} \in \BZ
\end{equation}
as the dimensions of the weight spaces of this $\mathfrak{sl}_2\times \mathfrak{sl}_2$-action.\footnote{For convenience, our $(i,j)$-index is different with the one used in \cite{KKP} by a constant; in particular, our invariants satisfy that $n_d^{i,j}=0$ if $i<0$ or $j<0$.} These invariants are known as the refined BPS invariants of $X$, which are expected to refine curve counting invariants for $X$ defined via Gromov--Witten/Donaldson--Thomas/Pandharipande--Thomas theory \cite{13/2}.

This paper concerns two different mathematical theories of calculating the invariants (\ref{BPS}) which connect two types of geometries. The first approach is to use the \emph{perverse filtration}, as proposed by Hosono--Saito--Takahashi \cite{HST}, Kiem--Li \cite{KL}, and Maulik--Toda \cite{MT}. More precisely, we choose $\chi \in \BZ$ coprime to $d$, and consider Le Potier's moduli space $M_{d,\chi}$ of $1$-dimensional stable sheaves $\CF$ on $\BP^2$ with 
\[
[\mathrm{supp}(\CF)] = dH \in H_2(\BP^2, \BZ), \quad \chi(\CF) = \chi;
\]
see \cite{LP1}. Here $\mathrm{supp}(-)$ denotes the Fitting support, and $H$ is the hyperplane class of $\BP^2$. It is smooth under the coprime assumption. The associated Hilbert--Chow map induces an increasing filtration 
\[
P_0H^*(M_{d,\chi}, \BQ) \subset P_1H^*(M_{d,\chi}, \BQ) \subset \cdots \subset H^*(M_{d,\chi}, \BQ),
\]
called the perverse filtration; we refer to Section \ref{Sec1.1} for a brief review. The mathematical definition of the invariant (\ref{BPS}) is the dimension of the graded piece of the perverse filtration,
\begin{equation}\label{BPS1}
n_d^{i,j} :=  \dim \mathrm{Gr}^P_i H^{i+j}(M_{d,\chi}, \BQ), \quad\!
\mathrm{Gr}_i^P = P_i/P_{i-1}.
\end{equation}
By \cite{MS_GT}, each $n_d^{i,j}$ is independent on the choice of $\chi$ as conjectured in \cite{Toda}. This realizes the original physics proposal of Gopakumar--Vafa; the moduli space $M_{d,\chi}$ is considered to be the $D$-brane moduli space, and the $\mathfrak{sl}_2\times \mathfrak{sl}_2$-action is given by Hard Lefschetz actions on 
\[
\bigoplus_{i,j} \mathrm{Gr}^P_i H^{i+j}(M_{d,\chi}, \BQ)
\]
where this vector space has the same dimension as $H^*(M_{d,\chi}, \BQ)$. However, perverse filtrations are usually mysterious and complicated, which makes the invariants (\ref{BPS1}) hard to compute.

The second proposal is due to Nekrasov--Okounkov \cite{NO}, motivated by the index in M-theory. It predicts that the refined BPS invariants should alternatively be given by Nekrasov partition functions. To be more precise, \emph{conjecturally} (\ref{BPS1}) are calculated by the \emph{equivariant index} of certain Nakajima quiver varieties. This allows us to obtain a combinatorial algorithm to find double-indexed invariants $\widetilde{n}^{i,j}_d$, which are conjectured to recover $n_{d}^{i,j}$ defined via the perverse filtration. A detailed description is given in Section \ref{Section 3}. Using this algorithm, we find that the refined BPS invariant $n_d^{i,j}$, or rather its combinatorial counterpart $\widetilde{n}_d^{i,j}$, can be expressed as a nice product formula asymptotically, which we desribe as follows.

\smallskip

Consider the generating series of two variables:
\[
F_{d, \mathrm{BPS}}(q,t): = \sum_{i,j} n_d^{i,j} q^it^j.
\]
We also set
\[
H(q,t) : =  { \mathsf{S}}^\bullet \left(
\frac{q^2+t^2+q^2 t^2}{1-qt}
\right)= 
\prod_{i \geq 0} \frac{1}{(1-(qt)^i q^2) (1-(qt)^i q^2t^2) (1-(qt)^i t^2)}
\]
where $\mathsf{S}^\bullet$ denotes the plethystic exponential. 

\begin{conj}\label{conj1}
For $i+j \leq 2d-4$, we have 
\[
n_d^{i,j} = [H(q,t)]^{i,j}.
\]
Here $[-]^{i,j}$ denotes the $q^it^j$-coefficient in the expansion $q,t \to 0$. 
\end{conj}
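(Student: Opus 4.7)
The natural approach is to invoke the $P=C$ conjecture to reframe the statement in terms of the Chern filtration, and then extract the asymptotic product formula from the structure of the tautological ring. Granting $P=C$, we have $n_d^{i,j} = \dim \mathrm{Gr}^C_i H^{i+j}(M_{d,\chi},\BQ)$, so the problem reduces to computing the bi-graded Hilbert series of the tautological ring in the stable range $i+j \leq 2d-4$.

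The combinatorial structure of $H(q,t)$ suggests three distinguished tautological generators matched to the numerator $q^2 + t^2 + q^2 t^2$. Let $\mathbb{F}$ be a (twisted) universal sheaf on $M_{d,\chi} \times \BP^2$. Standard tautological classes arise as slant products $\mathrm{ch}_k(\mathbb{F})/\gamma$ with $\gamma$ ranging over the basis $\{1, H, H^2\}$ of $H^*(\BP^2)$. I would identify three primitive generators whose perverse bi-degrees $(i,j)$ are $(2,0)$, $(0,2)$, and $(2,2)$, and then show that their higher-order descendants contribute the geometric series $\prod_{i \geq 0} 1/(1 - (qt)^i \cdot (\text{generator}))$ in the plethystic expansion, the shift by $(qt)^i$ encoding the cohomological grading of successive descendant classes. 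The plethystic exponential $\mathsf{S}^\bullet$ would then reflect that all of these descendants together generate a free symmetric algebra inside $H^*(M_{d,\chi},\BQ)$ in the stable range.

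The heart of the argument, and the main obstacle, is to establish this freeness in the stable range: one must show that the candidate free polynomial algebra on the three bi-graded sequences of generators injects into $H^*(M_{d,\chi},\BQ)$ in cohomological degrees $\leq 2d-4$. Generation by tautological classes in this range should be accessible via known generation results for moduli of sheaves on rational surfaces. For the absence of relations, one natural route is to compare the Hilbert series of the candidate free ring against the Betti numbers of $M_{d,\chi}$ computed from the Nekrasov/quiver-variety equivariant index of Section \ref{Section 3}, and conclude injectivity by a dimension count. This strategy aligns with the paper's low-degree verification: for $d \leq 4$, where $P=C$ is proved directly, explicit computation of the tautological ring suffices; for general $d$, the sticking point would be a structural stability theorem — analogous to the Lehn--Sorger stability for cohomology of Hilbert schemes of points on surfaces — ruling out any unexpected tautological relations below the bound $2d-4$.
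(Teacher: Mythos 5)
The statement you are trying to prove is a \emph{conjecture} in the paper: the authors do not prove it in general, and the only unconditional content they establish is the $d=3,4$ cases (via Theorem \ref{Thm0.6}) together with a computer check of the combinatorial counterpart $\widetilde{n}_d^{i,j}$ for $d\leq 14$. What your first paragraph carries out --- granting $P=C$, identifying $n_d^{i,j}$ with the bigraded dimensions of the Chern filtration, and matching the three sequences of generators in cohomological degree $2k-2$ and Chern degrees $k, k-1, k-2$ against the three factors $(qt)^iq^2$, $(qt)^iq^2t^2$, $(qt)^it^2$ of $H(q,t)$ --- is precisely the paper's Proposition \ref{prop0.5} (``Conjecture \ref{P=C} implies Conjecture \ref{conj1}''), not a proof of Conjecture \ref{conj1} itself. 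The genuinely open ingredient is $P=C$, i.e.\ unconditional control of the \emph{perverse} filtration, and your proposal offers no route to it beyond invoking it.

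You also mislocate the difficulty. You call the freeness of the tautological subring in degrees $\leq 2d-4$ ``the heart of the argument and the main obstacle,'' but this is exactly Theorem \ref{thm0.2}, the main theorem of \cite{PS}, which the paper quotes and uses: the listed generators have no relations in $H^{*\leq 2d-4}(M_{d,\chi},\BQ)$. So that step is already available and requires no new stability theorem. Moreover, your proposed substitute --- proving freeness by comparing the candidate Hilbert series against Betti numbers extracted from the Nekrasov/quiver equivariant index of Section \ref{Section 3} --- is problematic: those indices compute the conjectural invariants $\widetilde{n}_d^{i,j}$, whose identification with $\dim\mathrm{Gr}^P_iH^{i+j}(M_{d,\chi},\BQ)$ is itself Conjecture \ref{conj0}, so a dimension count against them cannot serve as unconditional input. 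In short, your reduction reproduces Proposition \ref{prop0.5}, the piece you flag as hard is already known, and the piece that is actually hard ($P=C$) is assumed.
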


In particular, Conjecture \ref{conj1} predicts that each invariant $n_d^{i,j}$ stabilizes when $d \to +\infty$, and 
\[
F_{d, \mathrm{BPS}}(q,t) = H(q,t), \quad \mathrm{for}~~ d \to + \infty.
\]
As we will see in Section \ref{Section 4}, the bound $2d-4$ is expected to be optimal in Conjecture \ref{conj1} for any $d\geq 3$. 

\subsection{Tautological classes and $P=C$}\label{Sec0.2}
We propose a \emph{cohomological lift} of Conjecture \ref{conj1} using tautological classes and the Chern filtration. This is inspired by the $P=W$ conjecture for Hitchin systems as we will discuss in Section \ref{Sec0.4}.

The second and the third authors introduced in \cite{PS} the (normalized) tautological classes
\[
c_k(j) \in H^{2(k+j-1)}(M_{d,\chi}, \BQ);
\]
they are given by a \emph{normalization} of the integration over $H^j\in H^{2j}(\BP^2, \BQ)$ of the Chern character $\mathrm{ch}_{k+1}(\BF)$ associated with a universal family $\BF$; see Section \ref{sec1.2}. The main theorem of \cite{PS} is the the following generation result, where the bound $2d-4$ of Conjecture \ref{conj1} appeared naturally.

\begin{thm}[\cite{PS}]\label{thm0.2}
For $d\geq 3$, the tautological classes whose cohomological degrees $\leq 2d-4$:
\begin{equation}\label{generator}
 c_0(2), c_2(0) \in H^2(M_{d,\chi}, \BQ), \quad  c_{k}(0), c_{k-1}(1), c_{k-2}(2) \in H^{2k-2}(M_{d,\chi}, \BQ),~~~ k\in \{3,\dots, d-1 \}
\end{equation}
have no relations in $H^{*\leq 2d-4}(M_{d,\chi}, \BQ)$, and they generate $H^*(M_{d,\chi}, \BQ)$ as a $\BQ$-algebra.
\end{thm}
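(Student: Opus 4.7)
The plan is two-fold: first prove that the tautological classes $c_k(j)$ generate $H^*(M_{d,\chi},\BQ)$ as a $\BQ$-algebra, and then use a Hilbert-series comparison to rule out relations in degree $\leq 2d-4$.

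\textbf{Generation.} Since $\gcd(d,\chi)=1$, a universal sheaf $\BF$ exists on $M_{d,\chi}\times\BP^2$. Using the basis $\{1,H,H^2\}$ of $H^*(\BP^2,\BQ)$, the K\"unneth components of $\ch_\bullet(\BF)$ are the slant products $p_{1*}\bigl(\ch_{k+1}(\BF)\cdot p_2^*H^j\bigr)$ for $k\geq 0$ and $j\in\{0,1,2\}$; after the normalization of Section \ref{sec1.2} (which fixes the ambiguity in the universal family and absorbs the low-degree redundancies, in particular $c_1(1)$), these are exactly the listed $c_k(j)$ in the relevant range. The crux is then a Markman-style generation theorem: $H^*(M_{d,\chi},\BQ)$ is generated as a $\BQ$-algebra by the K\"unneth components of $\ch_\bullet(\BF)$. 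For $\BP^2$ I would pursue this via Bridgeland wall-crossing, connecting $M_{d,\chi}$ through birational transformations to a moduli space with classical tautological generation---e.g.\ a Hilbert scheme of points on $\BP^2$, reached via explicit Fourier--Mukai kernels---and tracking K\"unneth components across each wall using the kernel inducing the transformation.

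\textbf{Hilbert-series check.} With generation in hand, the absence of relations in degree $\leq 2d-4$ reduces to a counting argument. Setting $a_1=2$ and $a_m=3$ for $2\leq m\leq d-2$ as the numbers of listed generators in cohomological degree $2m$, the free polynomial $\BQ$-algebra on them has Hilbert series
\[
P_{\mathrm{free}}(s)=(1-s^2)^{-2}\prod_{m=2}^{d-2}(1-s^{2m})^{-3}.
\]
A direct expansion of $H(s,s)=(1-s^2)^{-2}\prod_{m\geq 2}(1-s^{2m})^{-3}$ shows $H(s,s)\equiv P_{\mathrm{free}}(s)\pmod{s^{2d-3}}$, since the omitted factors $(1-s^{2m})^{-3}$ for $m\geq d-1$ only contribute in degree $\geq 2d-2$. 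Existing computations of Betti numbers of $M_{d,\chi}$ (e.g.\ by Manschot, Mozgovoy--Reineke, Yuan) yield $P(M_{d,\chi},s)\equiv H(s,s)\pmod{s^{2d-3}}$ in the stable range $n\leq 2d-4$; combined with the surjection from the free algebra, this forces the natural map to be an isomorphism in degrees $\leq 2d-4$ and hence proves the absence of relations.

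\textbf{Main obstacle.} The hard step is generation. The Hilbert-series check is routine once the stable Poincar\'e polynomial is in hand. The wall-crossing route is attractive because the Fourier--Mukai kernels on $\BP^2$ are explicit, but tracking tautological classes across walls in the presence of non-smooth birational models as $d$ grows is delicate. A fallback is to use the Hilbert--Chow morphism $h\colon M_{d,\chi}\to|dH|$ together with the decomposition theorem: the perverse filtration interacts compatibly with tautological classes, and on a generic compactified-Jacobian fibre over a smooth curve the tautological generation statement is classical, yielding an inductive route to generation.
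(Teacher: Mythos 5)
There is a genuine gap in your generation step, and it sits exactly where the main content of the theorem lies. A Markman-type statement (however you reach it) gives generation of $H^*(M_{d,\chi},\BQ)$ by the \emph{infinite} family of K\"unneth components $c_k(j)$, $k\geq 0$, $j\in\{0,1,2\}$. The theorem asserts generation by the specific \emph{finite} sublist (\ref{generator}): one must also express every tautological class of cohomological degree $\geq 2d-2$ (i.e.\ every $c_k(j)$ with $k+j\geq d$) as a polynomial in the listed classes. Your phrase ``these are exactly the listed $c_k(j)$ in the relevant range'' quietly discards this step, but it is where the work is. In \cite{PS} the elimination is carried out by producing explicit geometric relations: for suitable $K$-theory classes $V$ on $\BP^2$ the complex $R\pi_{M*}(\BF\otimes\pi_P^*V)$ is (a shift of) a vector bundle of rank controlled by $d$, so its Chern classes vanish beyond that rank; expanding these vanishing Chern classes via Grothendieck--Riemann--Roch yields relations whose leading terms are the high $c_k(j)$ (the identities (\ref{relation}) reproduced in Section \ref{Sec2.3.3} are exactly of this type), and an induction eliminates them. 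Nothing in your proposal plays this role. Note that this also undercuts your second step: the Hilbert-series comparison proves injectivity only once you have a \emph{surjection from the free algebra on the finite list}, which is precisely what is missing.

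The rest is in order. Your freeness argument is essentially the one in \cite{PS}: the free algebra on the listed generators has Hilbert series agreeing with $H(s,s)$ modulo $s^{2d-3}$ (your computation of both series is correct), and the known stable Betti numbers of $M_{d,\chi}$ in degrees $\leq 2d-4$ match this count, forcing the surjection to be an isomorphism in that range. Two secondary remarks on the generation step: Markman's generation theorem applies to moduli of sheaves on Poisson surfaces and hence directly to $\BP^2$, so the Bridgeland wall-crossing route to Hilbert schemes --- which you yourself flag as delicate --- is not needed to get generation by all K\"unneth components; and the fallback via the Hilbert--Chow map and compactified Jacobians of fibres would at best control the associated graded of the perverse filtration, not the ring itself, so it does not obviously yield generation either.
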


Therefore, we call 
\[
H^{*\leq 2d-4}(M_{d,\chi}, \BQ) \subset H^*(M_{d,\chi}, \BQ)
\]
the \emph{free part} of the cohomology. The \emph{Chern filtration} of the free part is an increasing filtration
\[
C_0H^{*\leq 2d-4}(M_{d,\chi}, \BQ) \subset C_1H^{*\leq 2d-4}(M_{d,\chi}, \BQ) \subset \cdots \subset H^{*\leq 2d-4}(M_{d,\chi}, \BQ), 
\]
where the $k$-th piece $C_kH^{*\leq 2d-4}(M_{d,\chi}, \BQ)$ is defined as the span of all monomials 
\begin{equation}\label{Chern}
\prod_{i=1}^s c_{k_i}(j_i) \in H^{* \leq 2d-4}(M_{d,\chi}, \BQ),  \quad \textrm{with }  \sum_{i=1}^s k_i \leq k.
\end{equation}

The following conjecture connects the perverse and the Chern filtrations for the free part. 

\begin{conj}[$P=C$]\label{P=C}
For $d\geq 3$, we have
\[
P_k H^{* \leq 2d-4}(M_{d,\chi}, \BQ) = C_k H^{*\leq 2d-4}(M_{d,\chi}, \BQ).
\]
\end{conj}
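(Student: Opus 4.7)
The strategy is to establish the two inclusions
\[
C_k H^{*\leq 2d-4}(M_{d,\chi}, \BQ) \subset P_k H^{*\leq 2d-4}(M_{d,\chi}, \BQ) \quad \text{and} \quad P_k \subset C_k
\]
separately, and to reduce the second (harder) inclusion to a dimension count enabled by Theorem \ref{thm0.2}.

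For the inclusion $C_k \subset P_k$: the perverse filtration is multiplicative with respect to the cup product (a consequence of the decomposition theorem applied to the Hilbert--Chow morphism $\pi: M_{d,\chi} \to |dH| \cong \BP^N$). It therefore suffices to establish the perversity bound $c_k(j) \in P_k H^{2(k+j-1)}(M_{d,\chi}, \BQ)$ on each generator from (\ref{generator}). The approach is to analyze how the components of $\mathrm{ch}_{k+1}(\BF) \cdot H^j$ -- together with the normalizing corrections from \cite{PS} -- interact with $R\pi_*$, showing that they contribute to the appropriate perverse graded piece. The analog in the $P=W$ story for Hitchin systems (tautological classes from the universal Higgs bundle land at a prescribed perverse level) furnishes a template: one decomposes the pushforward via the support stratification of $\pi$ and tracks the slant products degree by degree.

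For the reverse inclusion $P_k \subset C_k$: once the first inclusion is in hand, it is enough to verify the dimensional equality $\dim \mathrm{Gr}^C_k = \dim \mathrm{Gr}^P_k$ in each cohomological degree $\leq 2d-4$. The dimension of the Chern graded piece is determined combinatorially by Theorem \ref{thm0.2}: in the free range, one counts monomials in the listed generators with total Chern degree exactly $k$. The perverse Betti numbers can be computed independently from the refined BPS invariants $n_d^{i,j}$, which in turn are accessible via the Nekrasov/Pandharipande--Thomas algorithm of Section \ref{Section 3} and, in the stable range $i+j \leq 2d-4$, are predicted by the explicit product $H(q,t)$ of Conjecture \ref{conj1}. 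A term-by-term comparison in the free range then upgrades $C_k \subset P_k$ to equality.

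For $d = 3, 4$, all of this is finite: $M_{d,\chi}$ has an explicit description via Le Potier's construction; one can write down the generators $c_k(j)$ directly, compute their cup product relations, and tabulate both $\mathrm{Gr}^C_k$ and $\mathrm{Gr}^P_k$ by hand. The matching is a combinatorial check. The main obstacle, even in these small cases, is the perversity bound $c_k(j) \in P_k$: this is the genuine geometric input that does not reduce to dimension counting. Establishing it requires either a direct decomposition-theorem computation for $\pi$, a support argument à la Ngô, or a $\chi$-independence/deformation argument exploiting the strange duality between $M_{d,\chi}$ and $M_{d, \chi+1}$. The dimension-matching step is, by contrast, routine once enough BPS numbers have been computed; the real work is on the side of controlling how tautological classes see the singularities of the Hilbert--Chow fibration.
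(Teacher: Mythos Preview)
Recall first that the statement is a \emph{conjecture}: the paper proves it only for $d\leq 4$ (Theorem~\ref{Thm0.6}). Read as a strategy for the general case, your plan has two genuine gaps; and even restricted to $d=3,4$, it is not the route the paper takes.

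First, multiplicativity of the perverse filtration is \emph{not} a consequence of the decomposition theorem. Knowing that $Rh_*\BQ$ splits as a sum of shifted perverse sheaves says nothing about how cup product on $H^*(M_{d,\chi},\BQ)$ interacts with perverse truncation. In the Hitchin setting this was one of the principal obstacles in every approach to $P=W$ \cite{dCHM1, dCMS, MS_PW, HMMS}, and it is not established for the Hilbert--Chow map $h$. Without it, the bound $c_k(j)\in P_k$ on generators does not propagate to monomials, so $C_k\subset P_k$ does not follow from your first step. Second, your dimension-matching argument for $P_k\subset C_k$ rests on further open conjectures. The numbers $\dim\mathrm{Gr}^P_k H^{k+j}$ are by definition the invariants $n_d^{k,j}$; the Nekrasov/PT algorithm computes $\widetilde{n}_d^{k,j}$, and the equality $n=\widetilde n$ is Conjecture~\ref{conj0}. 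Matching these with $[H(q,t)]^{k,j}$ in the free range is Conjecture~\ref{conj1}. So the ``routine'' dimension count presupposes two unproved statements.

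For $d=3,4$ the paper proceeds differently: it invokes neither multiplicativity, nor support theorems, nor any BPS input. Since the base $\BP H^0(\BP^2,\CO_{\BP^2}(d))$ is projective, the perverse filtration is characterized \emph{algebraically} by Proposition~\ref{dCM0}/Corollary~\ref{cor1.3}: each $P_kH^m$ is an explicit sum of intersections of kernels and images of cup product by $\xi=c_0(2)$. For $d=4$ the ring $H^*(M_{4,1},\BQ)$ is known in generators and relations by Chung--Moon \cite{CM}; the paper rewrites the tautological classes $c_k(j)$ in those generators (Theorem~\ref{prop2.9}) and then reads off the perverse filtration directly from formula~(\ref{prop1.1}), checking $P_k=C_k$ in $H^{*\leq 4}$ by an explicit (computer-assisted) calculation. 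The BPS matching of Theorem~\ref{thm0.7} is an \emph{output} of this computation, not an input.
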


The next proposition follows from a direct calculation of the dimension of the Chern filtration using the freeness result of Theorem \ref{thm0.2}. In particular, Conjecture \ref{P=C} is a cohomological enhancement of Conjecture \ref{conj1}, which explains the product formula $H(q,t)$.

\begin{prop}\label{prop0.5}
Conjecture \ref{P=C} implies Conjecture \ref{conj1}.
\end{prop}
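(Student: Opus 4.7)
The strategy is to use Conjecture \ref{P=C} to replace the perverse filtration by the Chern filtration, and then read off bigraded dimensions from the freeness statement in Theorem \ref{thm0.2}. Assuming Conjecture \ref{P=C}, for all $i+j \leq 2d-4$ we have
\[
n_d^{i,j} \;=\; \dim \mathrm{Gr}^P_i H^{i+j}(M_{d,\chi},\BQ) \;=\; \dim \mathrm{Gr}^C_i H^{i+j}(M_{d,\chi},\BQ),
\]
so the task reduces to computing the right-hand side as a bigraded Hilbert series and matching it with the coefficient $[H(q,t)]^{i,j}$.

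The first step is bookkeeping. Using the convention that a class of Chern degree $k$ in cohomological degree $2(k+j-1)$ carries bidegree $(k,\,k+2j-2)$ (Chern degree, resp.\ cohomological degree minus Chern degree), the generators in (\ref{generator}) produce the three sequences
\[
(2,0),\ (3,1),\ \dots,\ (d-1,d-3); \quad (2,2),\ (3,3),\ \dots,\ (d-2,d-2); \quad (0,2),\ (1,3),\ \dots,\ (d-3,d-1),
\]
coming respectively from the families $c_k(0)$, $c_{k-1}(1)$, $c_{k-2}(2)$ together with $c_2(0)$ and $c_0(2)$. On the other side, the plethystic factors $(qt)^n q^2$, $(qt)^n q^2 t^2$, $(qt)^n t^2$ of $H(q,t)$ carry bidegrees $(n+2,n)$, $(n+2,n+2)$, $(n,n+2)$, and imposing the cutoff $i+j_{\mathrm{coord}}\leq 2d-4$ produces exactly the same three sequences. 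This exhibits a canonical bijection between the generators of (\ref{generator}) and the factors of $H(q,t)$ within the cutoff.

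The second step is to apply Theorem \ref{thm0.2}. Since the tautological classes in (\ref{generator}) satisfy no polynomial relations inside $H^{*\leq 2d-4}(M_{d,\chi},\BQ)$, the associated graded of the Chern filtration on the free part is isomorphic to the free bigraded polynomial algebra on these generators (truncated to total bidegree $\leq 2d-4$). Consequently $\dim \mathrm{Gr}^C_i H^{i+j}$ equals the number of monomials of bidegree $(i,j)$ in those generators. Since any factor of $H(q,t)$ of total bidegree $> 2d-4$ cannot appear in a monomial of total bidegree $\leq 2d-4$, this monomial count agrees with $[H(q,t)]^{i,j}$ for all $i+j\leq 2d-4$, giving the desired equality.

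The only step requiring care is the bijection in the first part, between the generators of (\ref{generator}) and the factors of $H(q,t)$ inside the cutoff; this is a direct enumeration, as above. Beyond that, the argument is a formal Hilbert-series computation on a free polynomial algebra, and no serious obstacle is expected.
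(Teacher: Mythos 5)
Your proposal is correct and follows exactly the route the paper intends: the paper only remarks that the proposition ``follows from a direct calculation of the dimension of the Chern filtration using the freeness result of Theorem \ref{thm0.2},'' and your write-up supplies precisely that calculation, including the bidegree bookkeeping matching the generators of (\ref{generator}) with the plethystic factors of $H(q,t)$ inside the cutoff. No discrepancy with the paper's argument.
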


\begin{rmk}\label{rmk0.4}
The bound $2d-4$ of Conjecture \ref{P=C} is expected to be optimal. From the numerical perspective, this is due to the fact that the same bound in Conjecture \ref{conj1} is optimal for all the cases we have calculated. Moreover, from the cohomological perspective, we verify the optimality of the bound when $d=3,4$; see Remarks \ref{rmk1.4} and \ref{rmk2.11}.
\end{rmk}

The following diagram summarizes the picture above:
\begin{equation*}
    \begin{tikzcd}
     \textup{Perverse~filtration}  \arrow[d, snake it, "\,\dim \mathrm{Gr}(-)"] \arrow[rr, dashed, "\textup{free~part}"] & & \textup{Chern~filtration} \arrow[d, snake it, "\,\dim \mathrm{Gr}(-)"] \\
      F_{d,\mathrm{BPS}}(q,t) \arrow[rr,dashed, "{i+j \leq 2d-4}"] & & H(q,t).
\end{tikzcd}
\end{equation*}

\subsection{Low degree cases}

When $d=1,2$, the moduli space $M_{d,\chi}$ is the projective space $\BP H^0(\BP^2, \CO_{\BP^2}(d))$. The perverse filtration is trivial:
\[
\mathrm{Gr}^P_kH^m(M_{d,\chi}, \BQ) = 0,\quad \mathrm{for}\ \; k <m.
\]
From now on, we focus on the non-trivial cases $d\geq 3$. The main result of this paper is the following.

\begin{thm}\label{Thm0.6}
Conjecture \ref{P=C} holds for degrees $d =3, 4$.
\end{thm}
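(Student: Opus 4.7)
For $d \in \{3,4\}$ the free part is very small: when $d=3$ it consists only of $H^0 \oplus H^2$ with $H^2$ spanned by $c_0(2)$ and $c_2(0)$; when $d=4$ it adds $H^4$ spanned by products and by the three new generators $c_3(0), c_2(1), c_1(2)$. My plan is to prove the two inclusions
\begin{equation*}
C_k H^{*\leq 2d-4}(M_{d,\chi}, \BQ) \subseteq P_k H^{*\leq 2d-4}(M_{d,\chi}, \BQ)
\end{equation*}
and then upgrade to equality by a dimension count. The perverse filtration associated to the Hilbert--Chow map $\pi \colon M_{d,\chi} \to |dH|$ is multiplicative, i.e.\ $P_a \cdot P_b \subseteq P_{a+b}$, because the base is smooth; this reduces $C \subseteq P$ to a perverse-degree bound on each generator listed in Theorem~\ref{thm0.2}.

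The class $c_0(j)$ is (up to normalization) the pullback via $\pi$ of the $j$-th power of the hyperplane class on the linear system $|dH|$, hence automatically lies in $P_0$. The real content is to show $c_2(0) \in P_2$ (needed for both $d=3$ and $d=4$), and, for $d=4$, to show $c_3(0), c_2(1), c_1(2) \in P_3 \cap H^4$. I would verify these by combining two inputs: first, the support/fiber-wise characterization of the perverse filtration coming from the BBD decomposition of $R\pi_* \BQ$, which translates the bound $c_k(j) \in P_k$ into a vanishing statement for the restriction of $c_k(j)$ to pre-images of general linear subspaces of $|dH|$ of the appropriate codimension; and second, the explicit birational models of $M_{3,\chi}$ and $M_{4,\chi}$ worked out in the existing literature (via wall-crossing and Bridgeland-stability descriptions), which make these vanishing checks computable on concrete smooth ambient varieties.

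For the reverse inclusion I would argue by dimensions. By Theorem~\ref{thm0.2} the monomials in the generators $c_k(j)$ with $\sum k_i \leq k$ form a basis of $C_k$ in the free range, so $\dim \mathrm{Gr}^C_k H^{k+j}$ is a purely combinatorial count and, by Proposition~\ref{prop0.5}, matches $[H(q,t)]^{k,j}$ in the range $k+j \leq 2d-4$. On the other hand, $\dim \mathrm{Gr}^P_k H^{k+j}(M_{d,\chi}, \BQ)$ for $d=3,4$ can be extracted from known Poincaré polynomial calculations for $M_{3,\chi}$ and $M_{4,\chi}$ together with relative hard Lefschetz for $\pi$, or equivalently by running the Nekrasov-partition-function algorithm of Section~\ref{Section 3} and using the agreement with $n_d^{i,j}$ in low degrees as an independent check. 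Once $\dim C_k = \dim P_k$ on each cohomological piece of the free range, the established inclusion $C_k \subseteq P_k$ forces equality.

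The hardest step is the upper bound $c_k(j) \in P_k$ for the non-pullback generators, and within that the case $c_3(0) \in P_3$ for $d = 4$ will require the most care, since it is the first instance where the class does not come from a universal-family construction that is manifestly compatible with the Hilbert--Chow fibration, and where no elementary symmetry (like the Poincaré-dual pairing between $c_2(1)$ and $c_1(2)$) helps. If a direct support-theoretic verification turns out to be intractable, a fallback is to combine the easier bounds with the dimension comparison to first pin down $P_k$ up to a single unknown piece, and then resolve that piece by a pairing computation with a class already known to live in a complementary perverse degree.
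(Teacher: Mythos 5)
Your reduction for $d=3$ is fine (and coincides with the paper's: since $2d-4=2$, everything follows from the normalization statement, Proposition \ref{prop1.2}), but for $d=4$ the proposal has genuine gaps at exactly the points that carry the content of the theorem. First, the claim that $P_a\cdot P_b\subseteq P_{a+b}$ holds ``because the base is smooth'' is not a theorem: multiplicativity of perverse filtrations is a delicate property that fails for general maps and is a nontrivial result even for Hitchin systems. In the tiny range needed here it can be salvaged, but only by the argument the paper actually uses, namely that $c_0(2)$ is pulled back from $\BP^{14}$ and $c_2(0)$ is $h$-relatively ample, so relative Hard Lefschetz pins down the perversities of $c_0(2)^2$, $c_0(2)c_2(0)$, $c_2(0)^2$ directly. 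Second, and more seriously, the perversity upper bounds $c_1(2)\in P_1$, $c_2(1)\in P_2$, $c_3(0)\in P_3$ are only described as a strategy (``vanishing on preimages of general linear subspaces, checked on birational models''), and you yourself flag that this may be intractable. This is where all the work lies: the paper's route is to take Chung--Moon's explicit presentation of $H^*(M_{4,1},\BQ)$ in generators $\alpha,\beta,x,y,z$, convert those generators into the tautological ones (Theorem \ref{prop2.9}, which requires the Porteous formula for $z$, Grothendieck--Riemann--Roch for $\beta$, and a comparison of total Chern classes for $x,y$), and then apply the de Cataldo--Migliorini kernel/image description of $P_\bullet$ with $\xi=c_0(2)$ (Corollary \ref{cor1.3}) by computer algebra. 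Without some equivalent of this explicit ring presentation, your vanishing checks have no computational foothold.

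Third, the dimension count you propose for the reverse inclusion does not close for $d=4$. For $d=3$ the decomposition theorem for the elliptic fibration has three summands and the Betti numbers of $M_{3,\chi}$ plus relative Hard Lefschetz do determine every $\dim\mathrm{Gr}^P_iH^{i+j}$ (this is the paper's proof of Theorem \ref{thm0.7} for $d=3$). For $d=4$ the fibers are genus-$3$ curves, $Rh_*\BQ$ has seven perverse summands, and knowing $\dim H^4(M_{4,1})=6$ together with the Hard Lefschetz symmetries does not separate $\dim\mathrm{Gr}^P_1H^4$, $\dim\mathrm{Gr}^P_2H^4$, $\dim\mathrm{Gr}^P_3H^4$. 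Your alternative --- reading the dimensions off the Nekrasov partition function --- presupposes Conjecture \ref{conj0}, i.e.\ that $\widetilde n^{i,j}_4=n^{i,j}_4$; but that identity for $d=4$ is precisely Theorem \ref{thm0.7}, which in the paper is a \emph{consequence} of the perverse-filtration computation, not an input to it. So the argument as proposed is circular at this step. What is needed instead is a direct computation of the perversities of the degree-$4$ generators and of arbitrary linear combinations of $c_0(2)c_2(0)$ and $c_2(1)$ (to see that $\mathrm{Span}\langle c_0(2)c_2(0),c_2(1)\rangle\cap P_1H^4=0$), which again requires the explicit ring structure.
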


More precisely, we describe the cohomology ring $H^*(M_{d,\chi}, \BQ)$ for $d\leq 4$ in terms of the generators (\ref{generator}); then the perverse filtration can be calculated using the ring structure. As a byproduct, we compute the invariants (\ref{BPS1}) explicitly for $d=3 ,4$.

\begin{thm}[\textit{c.f.} Conjecture \ref{conj0}]\label{thm0.7}
For degrees $d=3,4$, the invariants (\ref{BPS1}) defined by the perverse filtration are matched with the refined BPS invariants defined by the Nekrasov partition functions or the refined Pandharipande--Thomas invariants. 
\end{thm}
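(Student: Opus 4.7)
The plan is to reduce Theorem \ref{thm0.7} to a finite numerical verification made tractable by Theorem \ref{Thm0.6}. For $d = 3, 4$ the cohomology ring $H^*(M_{d,\chi}, \BQ)$ is small enough that both sides of the conjectured identification can be tabulated completely and then compared term by term.

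First I would extract, from the proof of Theorem \ref{Thm0.6}, an explicit presentation of $H^*(M_{d,\chi},\BQ)$ as a $\BQ$-algebra in terms of the tautological generators (\ref{generator}), including all relations. Theorem \ref{Thm0.6} gives $P_k = C_k$ on the free range $* \leq 2d-4$, so each generator $c_k(j)$ sits in $P_k$. Because the perverse filtration associated to the Hilbert--Chow map $M_{d,\chi} \to |dH|$ is multiplicative under cup product ($P_i \cdot P_j \subset P_{i+j}$), the perverse degree of every monomial in the generators is bounded by the sum of the perverse degrees of its factors. This propagates the Chern-filtration computation beyond degree $2d-4$ and yields an explicit candidate filtration on all of $H^*(M_{d,\chi},\BQ)$.

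Second, I would pin down the candidate filtration as the actual perverse filtration using the symmetries $P_i \mathrm{Gr}^P \simeq \mathrm{Gr}^P_{2r-i}$ coming from relative Hard Lefschetz for the Hilbert--Chow map (here $r$ is the defect of the map) together with Poincar\'e duality. Combining these symmetries with the Betti numbers of $M_{d,\chi}$ (known for $d=3,4$ and recoverable from the ring presentation) determines the graded dimensions $n_d^{i,j} = \dim \mathrm{Gr}^P_i H^{i+j}(M_{d,\chi},\BQ)$ uniquely. On the Nekrasov/refined PT side, I would run the combinatorial algorithm of Section \ref{Section 3} for $d = 3, 4$ to tabulate $\widetilde{n}_d^{i,j}$ as explicit integers; these can be cross-checked against the refined PT computation of local $\BP^2$ via the refined topological vertex. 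A term-by-term comparison of the two tables then finishes the proof.

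The main obstacle is the propagation step above degree $2d-4$. In the free range, Theorem \ref{Thm0.6} identifies the perverse filtration with an intrinsic algebraic filtration; above this range no such clean identification is available, and one must extract the perverse degree from the multiplicative upper bound combined with the Hard Lefschetz/Poincar\'e duality constraints, ruling out ``unexpected drops'' of the perverse degree. For $d=3$ the middle-degree cohomology is small enough that this bookkeeping is essentially automatic; for $d=4$, where both the ring and the middle-degree Betti numbers are larger, this is where the bulk of the work will lie.
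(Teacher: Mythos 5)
Your overall architecture (reduce to a finite computation with the explicit ring presentation, then compare tables with the Nekrasov side) is the right one, and your step of tabulating $\widetilde{n}_d^{i,j}$ combinatorially matches Section \ref{Section 4}. But the mechanism you use to compute the perverse side has a genuine gap: you assume the perverse filtration of the Hilbert--Chow map is multiplicative, $P_i\cdot P_j\subset P_{i+j}$, and use this to propagate perversities of the generators to all monomials. Multiplicativity of a perverse filtration is \emph{not} a formal property of perverse truncation; for these fibrations it is a serious theorem that is neither proved nor invoked anywhere in this paper (the only automatic statement is that cup product with classes pulled back from the base preserves $P_\bullet$). Without it, your candidate filtration is not even an a priori upper bound. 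Moreover, even granting multiplicativity, you would only obtain $C_k\subset P_k$, and your claim that relative Hard Lefschetz plus Poincar\'e duality plus the Betti numbers then ``determines the graded dimensions uniquely'' is asserted rather than argued; since the symmetry $n^{i,j}=n^{2r-i,j}$ relates graded pieces in \emph{different} cohomological degrees, ruling out drops of perversity is a nontrivial global bookkeeping problem, and it is exactly the point where such an argument can fail.

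The paper avoids both issues by using the de Cataldo--Migliorini description of the perverse filtration (Proposition \ref{dCM0}, specialized in Corollary \ref{cor1.3}): with $\xi=c_0(2)$ the pullback of the hyperplane class from the base,
\[
P_kH^m(M_{d,\chi},\BQ)=\sum_{i\geq 1}\left(\mathrm{Ker}(\xi^{b+k+i-m})\cap \mathrm{Im}(\xi^{i-1})\right)\cap H^m(M_{d,\chi},\BQ),
\]
which computes $P_\bullet$ \emph{exactly} from the ring structure by linear algebra, with no multiplicativity input and no symmetry argument. For $d=4$ this is fed the Chung--Moon presentation (Theorem \ref{thm2.1}) rewritten in tautological generators (Theorem \ref{prop2.9}) and evaluated by computer; for $d=3$ the paper instead reads off all $n_3^{i,j}$ directly from the decomposition theorem for the elliptic fibration $h:M_{3,\chi}\to\BP^9$ together with the Betti numbers of the projective bundle $M_{3,\chi}$. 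To repair your proposal you should either replace the multiplicativity step by this kernel/image characterization, or supply a proof of multiplicativity together with a complete verification that the Hard Lefschetz constraints close the gap between the upper bound and the actual filtration.
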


In particular, our calculation shows that the (unrefined) BPS invariants $n_{g,d}$ induced by the perverse filtration \cite{HST, KL, MT} match with those defined by Gromov--Witten or Pandharipande--Thomas theory; this yields the Gopakumar--Vafa/Gromov--Witten correspondence for $d=3,4$.

\subsection{$\bf P=W$ and $\bf P=C$}\label{Sec0.4}

The $P=C$ phenomenon is closely related to the $P=W$ conjecture of de Cataldo, Hausel, and Migliorini \cite{dCHM1}.

Let $\Sigma$ be a smooth projective curve of genus $g\geq 2$, and let $n,d$ be two coprime integers. The moduli space $M_{\mathrm{Dol}}$ of stable Higgs bundles on $\Sigma$ of rank $n$ and degree $d$ admits a perverse filtration induced by the associated Hitchin system. The $P=W$ conjecture asserts that the perverse filtration of $M_{\mathrm{Dol}}$ is matched with the double-indexed weight filtration associated with the corresponding character variety $M_{\mathrm{B}}$ via the non-abelian Hodge correspondence:
\[
``P=W": \quad P_k H^*(M_{\mathrm{Dol}}, \BQ) = W_{2k} H^*(M_{\mathrm{B}}, \BQ).
\]
This conjecture has been proven recently in \cite{MS_PW} and \cite{HMMS} independently.

Analogous to the tautological classes $c_k(j)$, we consider the tautological classes for the Hitchin moduli space
\[
c_k(\gamma) \in H^*(M_{\mathrm{Dol}}, \BQ), \quad k\in \BZ_{\geq 0}, ~ \gamma \in H^*(\Sigma,\BQ)
\]
given by a normalization of the integration over $\gamma$ of $\mathrm{ch}_k(\CU)$ \cite[Section 0.3]{dCMS}, where $\CU$ is a fixed universal bundle. These classes are proven to generate the cohomology \cite{Markman} and their weights on $M_\mathrm{B}$ were calculated in \cite{Shende}. Consequently, the $P=W$ conjecture is equivalent to:
\begin{equation}\label{P=C_Higgs}
``P=C": \quad P_kH^*(M_{\mathrm{Dol}}, \BQ) = C_kH^*(M_{\mathrm{Dol}}, \BQ)
\end{equation}
where the Chern filtration is defined by the Chern degrees of the tautological classes as in (\ref{Chern}); see \cite[Conjecture 0.3]{dCMS}. In fact, all the approaches in \cite{dCHM1, dCMS, MS_PW, HMMS} for (certain cases of) $P=W$ are to prove $P=C$ via various techniques. 

In view of (\ref{P=C_Higgs}), Conjecture \ref{P=C} is an analog of $P=W$ for Hitchin systems. We note that the major difference between Conjecture \ref{P=C} and (\ref{P=C_Higgs}) is that the former only holds for the free part as explained by Remark \ref{rmk0.4}. This may be due to the fact that the fibration associated with $M_{d,\chi}$ fails to be Lagrangian.\footnote{For another type of Lagrangian fibration --- the Beauville--Mukai system associated with a K3 or an abelian surface, there is a version of $P=C$ for the total cohomology; see \cite[Theorem 2.1]{dCMS}.} 

The enumerative geometry perspective of $P=W$ concerning the refined BPS invariants for the local curve $T^*\Sigma \times \BC$ was discussed in \cite{CDP}. It connects the conjecture of Hausel--Rodriguez-Villegas \cite{HRV} on mixed Hodge polynomials of character varieties with certain equivariant index of the Hilbert scheme $\mathrm{Hilb}(\BC^2,n)$ of points on $\BC^2$.

\subsection{Relations to other work}
Recently there has been much work in connections between moduli of 1-dimensional sheaves on $\BP^2$ and enumerative geometry for local or logarithmic $\BP^2$ \cite{PB,PB2,BFGW}. In the case of a K3 surface or an abelian surface $S$, the Gopakumar--Vafa theory for the Calabi--Yau 3-fold $S\times \BC$ is closely related to compact hyper-K\"ahler geometries; the $P=C$ phenomenon was deduced in \cite{dCMS} and the invariants (\ref{BPS1}) were calculated in terms of Hodge numbers of certain compact hyper-K\"ahler manifolds \cite{SY, HLSY}; this is matched with the prediction from physics \cite{KKV, KKP}. Refined BPS invariants for local $\BP^2$ have been studied via stable pairs \cite{CKK}.

The ``$P=C$" phenomenon also appeared in geometric representation theory for certain affine Springer fibers \cite{OY1,OY2}.

\subsection{Acknowledgements}
We wish to thank Pierrick Bousseau, Mark Andrea de Cataldo, Davesh Maulik, Nikita Nekrasov, Andrei Okounkov, and Rahul Pandharipande for many conversations over the years on BPS invariants, DT/PT invariants, and perverse filtrations. J.S. was supported by the NSF grant DMS-2134315.

\section{Perverse filtrations, moduli spaces, and tautological classes}\label{Sec1}

In this section, we review some basic facts about perverse filtrations, moduli of 1-dimensional stable sheaves on $\BP^2$, and the (normalized) tautological classes introduced in \cite{PS}. In Proposition \ref{prop1.2}, we reinterpret the normalization of \cite{PS} as the only one enforcing $``P=C"$ to hold for $H^{*\leq 2}$. As a toy example of the calculation in the next section, we conclude this section with the proofs of Theorem \ref{Thm0.6} and Theorem \ref{thm0.7} for $d=3$.


\subsection{Perverse filtrations}\label{Sec1.1}

Let $f: X \to Y$ be a proper morphism between irreducible nonsingular quasiprojective varieties with $\dim X = a$ and $\dim Y = b$. Let $r$ be the \textit{defect of semismallness} of $f$:
\[
r: = \dim X \times_Y X - \dim X.
\]
For convenience, we further assume that $f$ has equidimensional fibers, so that $r = a-b$. The perverse filtration 
\[
P_0H^m(X, \BQ) \subset P_1H^m(X, \BQ) \subset \dots \subset P_{2r}H^m(X, \BQ) =  H^m(X, \BQ)
\]
is an increasing filtration on the cohomology of $X$ governed by the topology of the morphism $f$; it is defined to be
\[
P_iH^m(X, \BQ) := \mathrm{Im}\left\{ H^{m-b}(Y, {^\mathbf{p}\tau_{\leq i}} (Rf_* \BQ_X[b])) \to H^m(X, \BQ)\right\}
\]
where $^\mathbf{p}\tau_{\leq * }$ is the perverse truncation functor \cite{BBD}. We say that a class $\gamma \in H^*(X, \BQ)$ has \emph{perversity} $k$, if 
\[
\gamma \in P_kH^*(X, \BQ) \setminus P_{k-1}H^*(X, \BQ).
\]

In general, the perverse filtration associated with a morphism is very complicated and hard to compute, as it relies on the mysterious perverse truncation functor. In the case when the target $Y$ is projective, we may describe the perverse filtration via an ample class on $Y$ as follows.

We fix $\eta$ to be an ample class on $Y$. Its pullback gives a class $\xi:= f^*\eta \in H^2(X, \BQ)$, which acts on the rational cohomology of $X$ via cup product:
\[
\xi: H^m(X, \BQ) \xrightarrow{~~-\cup \xi~~} H^{m+2}(X, \BQ).
\]

\begin{prop}[\emph{c.f.} \cite{dCM0} Proposition 5.2.4]\label{dCM0}
With the notation as above, we have
    \begin{equation}\label{prop1.1}
    P_k H^m(X,\mathbb{Q})=\sum_{i\geq 1} \left( \mathrm{Ker} (\xi^{b+k+i-m}) \cap \operatorname{Im} (\xi^{i-1})\right) \cap H^m(X, \mathbb{Q}).
    \end{equation}
\end{prop}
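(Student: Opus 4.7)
The plan is to derive the formula from the Decomposition Theorem of Beilinson--Bernstein--Deligne--Gabber together with Relative Hard Lefschetz, reducing it to a bookkeeping statement about a natural bigrading of $H^*(X, \BQ)$.

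First I would apply the Decomposition Theorem to $f$ to obtain a splitting $Rf_* \BQ_X[a] \cong \bigoplus_{k=-r}^{r} P_k[-k]$ in $D^b_c(Y)$, where $P_k := {}^\mathbf{p}\mathcal{H}^k(Rf_* \BQ_X[a])$ is a semisimple perverse sheaf on $Y$ and Relative Hard Lefschetz gives $\eta^k \colon P_{-k} \xrightarrow{\sim} P_k$ for $k \geq 0$. Taking hypercohomology yields a decomposition
\[
H^m(X, \BQ) \;=\; \bigoplus_{k=-r}^{r} V^m_k, \qquad V^m_k := H^{m - a - k}(Y, P_k),
\]
and a direct unwinding of ${}^\mathbf{p}\tau_{\leq i}$ against this splitting (after the shift $Rf_*\BQ_X[b] = Rf_*\BQ_X[a][-r]$) identifies the perverse filtration with the partial sums $P_i H^m(X, \BQ) = \bigoplus_{k \leq i - r} V^m_k$. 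Since $\xi = f^*\eta$ is pulled back from $Y$, cup product with $\xi$ respects the splitting: it sends $V^m_k$ to $V^{m+2}_k$ and acts there as cup product with $\eta$ on $H^*(Y, P_k)$. Because $Y$ is projective and each $P_k$ is a semisimple perverse sheaf, Hard Lefschetz for perverse sheaves provides isomorphisms $\eta^\ell \colon H^{-\ell}(Y, P_k) \xrightarrow{\sim} H^\ell(Y, P_k)$ for $\ell \geq 0$, and a primitive/Lefschetz decomposition of each $V^*_k$ with respect to the operator $\xi$.

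The remaining step is a linear-algebraic computation on this $\mathfrak{sl}_2$-structure. The primitive decomposition of each $V^*_k$ characterizes exactly which powers of $\xi$ annihilate a class in $V^m_{k'}$ and which powers have it in their image, in terms of its primitive weight. A direct accounting of the exponents $b + k + t - m$ and $t - 1$ (with $t \geq 1$) then shows that the intersections $\ker \xi^{b + k + t - m} \cap \mathrm{Im}\,\xi^{t-1}$, as $t$ ranges, sweep out exactly the summands $V^m_{k'}$ with $k' \leq k - r$, and nothing more; summing over $t$ recovers $P_k H^m(X, \BQ)$. The main difficulty is precisely this accounting: the exponents in the formula must land exactly at the boundaries between adjacent perverse-degree summands, and one has to verify both inclusions, i.e., that for each $k' \leq k - r$ some value of $t$ places the entire summand $V^m_{k'}$ inside $\ker \xi^{b+k+t-m} \cap \mathrm{Im}\,\xi^{t-1}$, while for $k' > k - r$ no value of $t$ produces a nonzero contribution from $V^m_{k'}$.
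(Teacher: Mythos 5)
The paper does not actually prove this proposition: it is imported, up to a shift in normalization, from de Cataldo--Migliorini \cite{dCM0}, and your sketch is essentially a reconstruction of the argument underlying that reference --- decomposition theorem over the projective base, hard Lefschetz for the semisimple perverse summands, and a primitive-decomposition bookkeeping of kernels and images of powers of $\xi$. The outline is sound and your exponent accounting does come out right: an element of $V^m_{k'}$ lying on a primitive $\xi$-string through $\xi^s v$ belongs to $\mathrm{Im}(\xi^{t-1})$ iff $t\le s+1$ and to $\mathrm{Ker}(\xi^{b+k+t-m})$ iff $t\ge s+1-(k-r-k')$, so a common $t\ge 1$ exists exactly when $k'\le k-r$, which is the perverse-filtration condition. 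Two points in your write-up need repair, though neither is fatal. First, the isomorphism you attribute to ``Relative Hard Lefschetz'' is misstated: cup product with $\xi=f^*\eta$ does not shift perverse degree, and relative hard Lefschetz concerns an $f$-ample class on $X$, not a class pulled back from $Y$; fortunately this statement is never used, and the input you do use --- hard Lefschetz $\eta^\ell\colon H^{-\ell}(Y,P_k)\xrightarrow{\sim}H^\ell(Y,P_k)$ for each perverse constituent --- is the correct one. Second, the claim that ``cup product with $\xi$ respects the splitting'' is not automatic: the decomposition-theorem isomorphism is non-canonical, and for an arbitrary choice the morphism $\eta\cup(-)\colon Rf_*\BQ_X[b]\to Rf_*\BQ_X[b][2]$ may have off-diagonal components between distinct perverse constituents (living in $\mathrm{Ext}^{1}$ and $\mathrm{Hom}$ groups that need not vanish), and the ones raising the perverse index would wreck your accounting. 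One must either choose a splitting compatible with the $\eta$-action (Deligne's compatible decompositions), or argue that $\xi$ preserves the perverse filtration and run the string computation on the associated graded, where its action is canonically diagonal. With that addendum your proof is complete and agrees with the cited result.
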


As in (\ref{BPS1}), we are interested in the dimension of the graded piece of the perverse filtration 
\[
\dim \mathrm{Gr}^P_iH^{i+j}(X, \BQ)
\]
which can be expressed via the decomposition theorem \cite{BBD}. More precisely, applying the decomposition theorem to $f: X\to Y$, we obtain that
\begin{equation*}
R\pi_*\BQ_X[b]\simeq \bigoplus_{i=0}^{2r}\mathcal{P}_i[-i] \in D^b_c(Y)
\end{equation*}
with $\mathcal{P}_i$ a semisimple perverse sheaf on $Y$. The perverse filtration can be identified as
\[
P_kH^m(X,\BQ)=\mathrm{Im}\Big\{H^{m-b}(Y, \bigoplus_{i=0}^k\mathcal{P}_i[-i])\to H^m(X,\BQ)\Big\},
\]
and hence
\begin{equation}\label{perv_coh}
\dim \mathrm{Gr}_i^P H^{i+j} (X, \BQ) =  \dim H^{j-b}(Y, \CP_i).
\end{equation}

\subsection{Tautological classes for moduli spaces}
\label{sec1.2}
From now on we focus on the moduli of 1-dimensional stable sheaves. We review the tautological classes introduced in \cite[Section 1.1]{PS}. As we will show in Proposition \ref{prop1.2}, the normalization we used in \cite{PS} is crucial for the $P=C$ conjecture to hold.

Recall that the moduli space $M_{d,\chi}$ parameterizes 1-dimensional stable sheaves $\CF$ on $\BP^2$ with 
\[
[\mathrm{supp}(\CF)] =dH, \quad \chi(\CF) = \chi.
\]
Here the stability condition is with respect to the slope
\[
\mu( \CE) = \frac{\chi(\CE)}{c_1(\CE)\cdot H} \in \BQ.
\]
It admits a Hilbert--Chow map
\[
h: M_{d,\chi} \to \BP H^0(\BP^2, \CO_{\BP^2}(d)), \quad \CF \mapsto \mathrm{supp}(\CF),
\]
sending a sheaf to its Fitting support. This is a flat and proper map, which induces a perverse filtration 
\[
P_0H^*(M_{d,\chi}, \BQ) \subset P_1H^*(M_{d,\chi}, \BQ) \subset \cdots \subset H^*(M_{d,\chi}, \BQ)
\]
by the discussion of Section \ref{Sec1.1}. The assumption $\mathrm{gcd}(d,\chi) =1$ garantees that the stability and the semistability conditions coincide. Its connection to the enumerative geometry of the local Calabi--Yau 3-fold $X = \mathrm{Tot}(K_{\BP^2})$ relies on the fact that $M_{d,\chi}$ can also be viewed as the moduli of 1-dimenensional stable sheaves on $X$ with the same numerical data. 

Let $\BF$ be a universal family over $\BP^2 \times M_{d,\chi}$. For a stable sheaf $[\CF] \in M_{d,\chi}$, the restriction of $\BF$ to the fiber $\BP^2 \times [\CF]$ recovers $\CF$. Since the choice of $\BF$ is not unique, we need to normalize its Chern character $\mathrm{ch}(\CF)$ to obtain cohomolgy classes $c_k(j)$ of Section \ref{Sec0.2} which are independent on $\BF$. We review the construction as follows.

\smallskip

For a universal family $\BF$ and a class\footnote{We change the notation of the normalizing class in \cite{PS} to $\delta$ as $\alpha$ is used in Theorem \ref{thm2.1} below.}
\begin{equation*}\label{alpha}
\delta= \pi_P^* \delta_P + \pi_M^* \delta_M \in H^2(\BP^2\times M_{d,\chi}, \BQ), \quad \textup{with}~~ \delta_P \in H^2(\BP^2, \BQ), ~~~\delta_M\in H^2(M_{d,\chi}, \BQ),
\end{equation*}
we consider the twisted Chern character
\[
\mathrm{ch}^\delta(\BF) := \mathrm{ch}(\BF) \cdot \mathrm{exp}(\delta),
\]
and we denote $\mathrm{ch}_k^\delta(\BF)$ its degree $k$-part. For $H^j \in H^{2j}(\BP^2, \BQ)$, we set
\[
c^\delta_k(j): = \int_{H^j} \mathrm{ch}_{k+1}^\delta(\BF) =\pi_{M*}\left( \pi_P^* {H^j} \cdot \mathrm{ch}_{k+1}^\delta(\BF)\right) \in H^{2(k+j-1)}(M_{d,\chi}, \BQ),
\]
where $\pi_P$ and $\pi_M$ are the natural projections from the product to $\BP^2$ and $M_{d,\chi}$, respectively.

\smallskip

The next proposition asserts that $P=C$ for $H^{*\leq 2}(M_{d,\chi}, \BQ)$ holds for the $\delta$-twisted tautological classes $c^\delta_k(j)$, if and only if  $\delta$ is the normalization class chosen in \cite{PS}.

\begin{prop} \label{prop1.2}
With the above notation, we have:
\begin{enumerate}
    \item[(i)] There exists a unique $\delta_0$ satisfying the condition that the classes \[
    c_k^{\delta_0}(j) \in H^{2(k+j-1)}(M_{d,\chi}, \BQ) \quad \mathrm{with}~~ k+j \leq 2
    \]
    has perversity $k$.
 \item [(ii)] We define $c_k(j)$ using the class $\delta_0$ determined by (i). Then 
 \[
 P_kH^{*\leq 2}(M_{d,\chi}, \BQ) = C_kH^{*\leq 2}(M_{d,\chi}, \BQ).
 \]
\end{enumerate}
\end{prop}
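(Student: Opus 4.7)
The plan is to translate the perversity conditions into linear equations on $\delta$, count parameters against equations, and solve. Writing $\delta = a\,\pi_P^*H + \pi_M^*\delta_M$ with $a \in \BQ$ and $\delta_M \in H^2(M_{d,\chi}, \BQ)$ (using $H^1(\BP^2,\BQ) = 0$), and using Theorem \ref{thm0.2} to see $\dim H^2(M_{d,\chi}, \BQ) = 2$, the parameter space has dimension $3$. The next step is to compute each $c_k^\delta(j)$ with $k+j \leq 2$ as an affine-linear function of $\delta$. The key simplification is $\mathrm{ch}_0(\BF) = 0$ (fiberwise rank zero), so $\mathrm{ch}_1^\delta(\BF) = \mathrm{ch}_1(\BF)$ and the classes $c_0^\delta(j)$ do not depend on $\delta$; a Künneth decomposition $c_1(\BF) = d\,\pi_P^*H + \pi_M^*\alpha$ together with the identification of $\BF|_{\{x\}\times M_{d,\chi}}$ with the pushforward of a line bundle on the incidence divisor $\{[\CF] : x \in \mathrm{supp}(\CF)\}$ gives $c_0^\delta(1) = d$ and $c_0^\delta(2) = \alpha = \xi := h^*\eta$. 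Grothendieck--Riemann--Roch along $\pi_M$ (using $\mathrm{td}(\BP^2) = 1 + \tfrac{3}{2}H + H^2$) then yields $\pi_{M*}\mathrm{ch}_2(\BF) = \chi - \tfrac{3d}{2}$, and hence
\[
c_1^\delta(0) = \bigl(\chi - \tfrac{3d}{2}\bigr) + ad, \qquad c_1^\delta(1) = c_1^0(1) + a\,\xi + d\,\delta_M.
\]

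The key structural input is that the perverse filtration on $H^2(M_{d,\chi}, \BQ)$ has graded dimensions $(\dim\mathrm{Gr}_0^P, \dim\mathrm{Gr}_1^P, \dim\mathrm{Gr}_2^P) = (1,0,1)$; equivalently $P_1 H^2 = P_0 H^2 = \BQ\,\xi$. This follows from $\dim H^2 = 2$, the inclusion $\xi \in P_0 H^2$, and the existence of a perversity-$2$ class in $H^2$ (to be verified by a pushforward calculation along $h$ combined with relative Hard Lefschetz for $\xi$). Combined with $P_0 H^0 = H^0$, this implies that any class in $H^{\leq 2}$ of strict perversity $1$ must vanish. Hence the perversity conditions on $c_k^\delta(j)$ for $k+j \leq 2$ reduce to $c_1^\delta(0) = 0$ (one scalar equation) and $c_1^\delta(1) = 0$ (two scalar equations since $\dim H^2 = 2$), a triangular system of $3$ equations in the $3$-dimensional parameter space. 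Solving in sequence yields $a = \tfrac{3}{2} - \tfrac{\chi}{d}$ and $\delta_M = -\tfrac{1}{d}(c_1^0(1) + a\,\xi)$, which gives a unique $\delta_0$ and proves part (i).

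For part (ii), with $c_1(0) = c_1(1) = 0$ in the normalization, the Chern filtration on $H^{\leq 2}$ becomes explicit: $C_0 H^0 = C_1 H^0 = \BQ = H^0$, $C_0 H^2 = C_1 H^2 = \BQ\,\xi$ (the contributions of $c_1(0), c_1(1)$ vanish), and $C_2 H^2 = \BQ\,\xi + \BQ\,c_2(0) = H^2$. These coincide with $P_0 H^{\leq 2}, P_1 H^{\leq 2}, P_2 H^{\leq 2}$ respectively by the dimension count above.

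The main obstacle is the structural input $\dim\mathrm{Gr}_1^P H^2 = 0$: without this vanishing, the perversity condition on $c_1^\delta(1)$ would be an inclusion $c_1^\delta(1) \in P_1 H^2$ rather than an equality $c_1^\delta(1) = 0$, and the linear system would be underdetermined. Establishing the vanishing is the one genuinely geometric ingredient, to be verified either by direct analysis of $\ker(\xi^b)\cap H^2$ via pushforward along the Hilbert--Chow map or by appealing to the decomposition theorem in low degrees, along the lines of \cite{MS_GT}.
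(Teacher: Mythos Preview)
Your approach is the same as the paper's: reduce the perversity constraints to the vanishing conditions $c_1^{\delta}(0)=0$ and $c_1^{\delta}(1)=0$ via the structural fact $P_1H^2=P_0H^2=\BQ\,\xi$, solve for $\delta_0$ (your explicit formula reappears later as Lemma~\ref{lemma2.5}), and then read off $P=C$ on $H^{\le 2}$. The one input you flag as ``to be verified'' is handled in the paper simply by observing that $H^2(M_{d,\chi},\BQ)$ is spanned by the pullback $\xi=c_0(2)$ and an $h$-relatively ample class (citing \cite[Proposition~1.3(c)]{PS}); this relative ampleness is also what gives $c_2^{\delta_0}(0)$ perversity~$2$, a condition that is part of the hypothesis in (i) but which your three-equation count does not explicitly check.
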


\begin{proof}
For (i), since  $H^0(M_{d,\chi}, \BQ) = P_0H^0(M_{d,\chi}, \BQ)$, we require that
\[
c^{\delta_0}_1(0) = 0 \in H^0(M_{d,\chi}, \BQ).
\]
Moreover, since $H^2(M_{d,\chi}, \BQ)$ is 2-dimensional generated by an ample class on the base pulled back via $h$ and an $h$-relative ample class, we have
\[
P_1H^2(M_{d,\chi}, \BQ) \setminus P_0H^2(M_{d,\chi}, \BQ) = 0.
\]
Therefore, we also require that 
\[
c^{\delta_0}_1(1) = 0 \in H^2(M_{d,\chi}, \BQ).
\]
These conditions determine $\delta_0$ uniquely by \cite[Proposition 1.2]{PS}. In particular the choice of $\delta_0$ using (i) recovers the tautological classes $c_k(j)$ introduced in \cite{PS}.

Part (ii) follows from \cite[Proposition 1.3(c)]{PS}. More precisely, since $c_0(2)$ is pulled back from the base $\BP H^0(\BP^2, \CO_{\BP^2}(d))$ and $c_2(0)$ is relative ample, we have
\[
c_0(2) \in P_0H^2(M_{d,\chi}, \BQ),~~c_2(0) \in P_2 H^2(M_{d,\chi}, \BQ) \setminus P_1H^2(M_{d,\chi}, \BQ).
\]
This proves $P=C$ for $H^{*\leq 2}(M_{d,\chi}, \BQ)$.
\end{proof}

As a consequence of Proposition \ref{dCM0}, we may determine the perverse filtration for $M_{d,\chi}$ using the tautological class $c_0(2) \in H^2(M_{d,\chi}, \BQ)$; this is our main tool of calculating the perverse filtration for low degrees $d$.

\begin{cor}\label{cor1.3}
The perverse filtration $P_\bullet H^*(M_{d,\chi}, \BQ)$ is characterized by the formula (\ref{prop1.1}) with $\xi = c_0(2)$. 
\end{cor}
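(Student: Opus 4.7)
The plan is to deduce the corollary directly from Proposition~\ref{dCM0} applied to the Hilbert--Chow map $h : M_{d,\chi} \to \BP H^0(\BP^2, \CO_{\BP^2}(d))$, after identifying $c_0(2)$ as a nonzero rational multiple of the pullback of an ample class from the base. The hypotheses of Proposition~\ref{dCM0} are already in place: $h$ is proper (since $M_{d,\chi}$ is projective) and, by the flatness noted in Section~\ref{sec1.2} together with smoothness of the target, has equidimensional fibers of dimension $(d-1)(d-2)/2$.

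The essential input is the identity $c_0(2) = \lambda \cdot h^*\eta$ for some nonzero $\lambda \in \BQ$, where $\eta$ denotes the hyperplane class on the base. First, the proof of Proposition~\ref{prop1.2}(ii) shows that $c_0(2)$ lies in $h^*H^2(-,\BQ)$ of the base. Since the base is a projective space, its $H^2$ is one-dimensional, spanned by $\eta$; thus $c_0(2) = h^*(\lambda \eta)$ for some $\lambda \in \BQ$. Nonvanishing of $\lambda$ follows from Theorem~\ref{thm0.2}: if $\lambda = 0$ then $c_0(2) = 0$, contradicting the fact that $c_0(2)$ is one of the free generators of $H^2(M_{d,\chi}, \BQ)$.

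Finally, I would observe that formula (\ref{prop1.1}) is invariant under rescaling $\xi$ by a nonzero scalar: cup product with $(\lambda\xi)^i = \lambda^i \xi^i$ has the same kernel and image as cup product with $\xi^i$. Hence substituting $\xi = c_0(2)$ for $\xi = h^*\eta$ in (\ref{prop1.1}) yields the same characterization of $P_\bullet H^*(M_{d,\chi}, \BQ)$, proving the corollary. I do not foresee any serious obstacle; the only point of care is registering the scale-invariance of the formula, which removes the need to verify that $c_0(2)$ itself is genuinely ample (rather than merely a nonzero multiple of an ample pullback).
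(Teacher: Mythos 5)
Your proof is correct and follows essentially the same route the paper intends: the corollary is immediate from Proposition~\ref{dCM0} once $c_0(2)$ is identified with (a nonzero multiple of) $h^*\eta$ for the hyperplane class $\eta$ on $\BP H^0(\BP^2,\CO_{\BP^2}(d))$, which the paper asserts via \cite[Proposition 1.3(c)]{PS} and which you re-derive from $c_0(2)\in h^*H^2$ plus the freeness of Theorem~\ref{thm0.2}. Your extra remark on the scale-invariance of formula~(\ref{prop1.1}) is a sensible piece of bookkeeping (in fact one can check $\lambda=1$ directly, since $c_0(2)$ is the divisor of sheaves whose support passes through a fixed point), but it does not change the argument.
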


\subsection{Symmetries}\label{Sec1.3}
We have two types of symmetries between the moduli spaces $M_{d,\chi}$:
\begin{enumerate}
    \item[(i)] The first type is given by the isomorphism
    \[
    \phi_1: M_{d,\chi} \xrightarrow{\sim} M_{d,\chi+d}, \quad \CF \mapsto \CF\otimes \CO_{\BP^2}(1).
    \]
    \item[(ii)] The second type is given by the isomorphism
    \[
    \phi_2: M_{d,\chi} \xrightarrow{\sim} M_{d, -\chi}, \quad \CF \mapsto \CE \kern -1.5 pt \mathit{xt}^1(\CF, \omega_{\BP^2}).
    \]
\end{enumerate}
Both symmetries preserve the morphism $h: M_{d,\chi} \to \BP H^0(\BP^2, \CO_{\BP^2}(d))$.
 Furthermore, by \cite[Proposition 1.4]{PS}, the tautological classes $c_k(j)$ are preserved (up to a sign) by the symmetries (i) and (ii) above.

Thus, in order to prove Theorem \ref{Thm0.6} and Theorem \ref{thm0.7} for $M_{d,\chi}$, it suffices to establish them for $M_{d,\chi'}$ with some $\chi'$ satisfying that 
\[
\chi' = \pm \chi ~~~ \mathrm{mod}~d.
\]

\subsection{Degree 3 case}

We conclude Section \ref{Sec1} with a complete calculation of the perverse filtration and their dimensions in the degree 3 case; in particular we prove Theorem \ref{Thm0.6} and Theorem \ref{thm0.7} for $d=3$.

\begin{proof}[Proof of Theorem \ref{Thm0.6} for $d=3$]
In this case the bound $2d-4 = 2$; therefore Conjecture \ref{P=C} only concerns $H^{* \leq 2}$, which follows immediately from Proposition \ref{prop1.2}(ii). \qedhere
\end{proof}

\begin{rmk}\label{rmk1.4}
    Since $h: M_{3,\chi} \to \BP^9$ is an elliptic fibration, we know that 
    \[
 P_2H^*(M_{3,\chi}, \BQ) = H^*(M_{d,\chi}, \BQ).
    \]
    Therefore by considering 
    \[
c_2(0)^2 \in H^4(M_{3,\chi}, \BQ),
    \]
    it is obvious that $P=C$ breaks down for $H^4(M_{3,\chi}, \BQ)$. In particular, the bound $2d-4$ of Conjecture \ref{P=C} is optimal for $d=3$. 
\end{rmk}

\begin{proof}[Proof of Theorem \ref{thm0.7} for $d=3$]

We need to show that the invariants
\[
n_3^{i,j} = \dim \mathrm{Gr}^P_iH^{i+j}(M_{3,\chi}, \BQ)
\]
are matched with the coefficients of $\widetilde{F}_{3, \mathrm{BPS}}(q,t)$ obtained in Section \ref{Section 4}. This can be achieved by calculating the perverse filtration using the ring structure \cite[Section 1.3]{PS} combined with Corollary \ref{cor1.3}. We leave this as an exercise to the reader as we will use this method to treat the $d=4$ case in Section \ref{Sec2} where the calculation is much more complicated. 

Here we give another proof via the decomposition theorem (\ref{perv_coh}). Since $h: M_{3,\chi} \to \BP^9$ is an elliptic fibration, we have by relative Hard Lefschetz the decomposition theorem associated with $h$:
\begin{equation}\label{decomp}
Rh_* \BQ[9] \simeq (\BQ[9]) \oplus \CP_1[-1] \oplus  (\BQ[9])[-2].
\end{equation}
Therefore, the only unknown is the invariant 
\[
n_3^{1,j} = \dim H^{j-9}(\BP^9, \CP_1), \quad j \in \BZ.
\]
This can be calculated by taking $H^j(-, \BQ)$ in (\ref{decomp}):
\[
\dim H^j(\BP^9, \BQ) + n_3^{1,j} + \dim H^{j-2}(\BP^9, \BQ) = \dim H^j(M_{3,\chi}, \BQ) = \dim  H^j(\BP^2 \times \BP^8, \BQ),
\]
where the last equation follows from the fact that $M_{3,\chi}$ is a projective bundle over $\BP^2$, see \cite{LP1}. Therefore we have calculated all the refined BPS invariants $n_3^{i,j}$, which completes the proof by comparing with $\widetilde{F}_{3, \mathrm{BPS}}(q,t)$ in Section \ref{Section 4}.
\end{proof}

\section{\texorpdfstring{$P=C$}{P=C} for degree 4 and matching BPS invariants} \label{Sec2}

\subsection{Overview}
We complete the proof of Theorem \ref{Thm0.6} and Theorem \ref{thm0.7} in this section. Since we concern the case $d=4$, in view of the discussion of Section \ref{Sec1.3}, we only need to prove both theorems for the moduli space $M_{4,1}$. From now on, we only consider the case $d=4, \chi =1$.

The cohomology ring $H^*(M_{4,1}, \BQ)$ has been calculated by Chung--Moon \cite{CM} explicitly in terms of generators given by certain geometric classes. In order to prove $P=C$, we apply Chung--Moon's result to calculate the ring structure of $H^*(M_{4,1}, \BQ)$ using the tautological classes $c_k(j)$. Then Corollary \ref{cor1.3} allows us to write the perverse filtration in terms of the tautological classes.

Our main technical theorem of this section is Theorem \ref{prop2.9} which provides the translation between the geometric classes $\alpha, \beta, x,y,z$ in Theorem \ref{thm2.1} below and the tautological classes of \cite{PS}.


\subsection{Cohomology of $M_{4,1}$}

We first recall the following theorem due to Chung--Moon \cite{CM}. For an algebraic class in $H^{2i}(M_{4,1}, \BQ)$, we say that this class is of algebraic degree $i$.

\begin{thm}[{\cite[Theorem 6.5]{CM}}]
\label{thm2.1}
The Chow ring of $M_{4,1}$ is given by\footnote{We correct a typo in the original paper.}
\begin{align*}
  A^*(M_{4,1})\simeq &\; \mathbb{Q}[\alpha, \beta, x,y,z]/\langle xz-yz, \beta^2z-3yz-9z^2, 3\alpha^2z-\alpha \beta z+yz,\beta^2y-3y^2-9yz,\\& \beta^2 x -xy-3y^2-3\alpha \beta z -9yz +9z^2, \beta^4 +3x^2 -9xy-3y^2-54yz-81z^2,\\& \beta yz +9\alpha z^2 -3\beta z^2, 2\beta xy-3\beta y^2 -9\alpha yz -27\alpha z^2 +9\beta z^2, 3\beta x^2 -7 \beta y^2 -36 \alpha yz\\& -108 \alpha z^2 +36 \beta z^2, \alpha^{12}+3\alpha^{11}\beta +3 \alpha^{10}(\beta^2 +2x -y)+ \alpha^9 (-\beta^3 +12 \beta x + 2 \beta y)\\& +3\alpha^8(9x^2 -16 xy +17 y^2) + 28 \alpha^7 \beta y^2 + 56 \alpha^6 y^3 + 201 \alpha \beta z^5 -19 yz^5 -613 z^6,\\& 6\alpha^{10} xy - 12 \alpha^{10} y^2 -10 \alpha^9 \beta y^2 -45 \alpha^8 y^3 -104 \alpha \beta z^6 + 2yz^6+310 z^7\rangle,
\end{align*}
where $\alpha, \beta$ are of algebraic degree $1$ and $x,y,z$ of degree $2$. This also gives the cohomology ring $H^{2*}(M_{4,1}, \mathbb{Q})$, with the degrees of the generators doubled.

\end{thm}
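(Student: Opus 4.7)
The plan is to realize $M_{4,1}$ as the end of a short chain of explicit birational modifications of a simpler moduli space, and to propagate the Chow ring through each step via standard blowup and projective bundle formulas. The natural starting point is Beilinson's spectral sequence on $\BP^2$: a generic $\CF\in M_{4,1}$ admits a resolution as the cokernel of a morphism between specific sums of line bundles $\CO(-i)$ whose ranks are pinned down by the Hilbert polynomial $P(m)=4m+1$. This exhibits $M_{4,1}$ as a GIT quotient of a smooth parameter space of matrices. By varying the GIT linearization, equivalently wall-crossing in $\alpha$-stability of pairs or in Bridgeland stability (in the spirit of Thaddeus, He--Li, and Choi--Chung--Maican), one produces a short sequence of smooth projective birational models connecting $M_{4,1}$ to a space with already understood cohomology, for example a projective bundle over a Grassmannian or a moduli space of stable pairs.

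At each wall one encounters either a smooth blowup or its inverse; applying Keel's blowup formula together with the projective bundle formula propagates a generators-and-relations presentation of the Chow ring through the chain. After the final modification, five natural generators should emerge: two divisor classes $\alpha,\beta$ pulled back from the ambient parameter space (one of which is, up to scalar, the hyperplane class $c_0(2)$ pulled back from $\BP H^0(\BP^2,\CO_{\BP^2}(4))$) and three codimension-two classes $x,y,z$ arising as exceptional divisors, Brill--Noether loci on the parameter space, or secondary tautological classes built from $\mathrm{ch}(\BF)$. The quadratic and cubic relations listed in the theorem then come directly from intersections of exceptional divisors and from the projective-bundle relations at each stage, while the two long high-degree relations should reflect top Chern class vanishings of bundles entering the birational chain, together with dimension constraints forcing certain monomials to be expressible in lower-degree classes.

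The main obstacle is completeness: once a candidate list of relations is produced, one must show it generates the entire relation ideal. I would attack this by combining two independent computations. First, Atiyah--Bott localization for the $T^2=(\BC^*)^2$-action on $\BP^2$ lifted to $M_{4,1}$: the fixed locus admits a combinatorial description in terms of $T^2$-equivariant semistable sheaves, which reduces the verification of any polynomial identity in $A^*(M_{4,1})$ to a finite combinatorial check on fixed components. Second, one computes the Poincar\'e polynomial of $M_{4,1}$ directly from the Bia\l{}ynicki--Birula decomposition for the same torus action and compares it to the Hilbert series of the candidate quotient ring $\BQ[\alpha,\beta,x,y,z]/I$; agreement of the two series forces $I$ to be the full relation ideal. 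This global dimension match is the place where careful bookkeeping of the wall-crossings really pays off, and it is where I expect most of the work to lie.
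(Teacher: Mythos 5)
The first thing to say is that the paper does not prove this statement at all: Theorem \ref{thm2.1} is imported verbatim (modulo a corrected typo) from Chung--Moon \cite[Theorem 6.5]{CM}, and the present article uses it as a black box. So the relevant comparison is with the proof in the cited source, not with anything in this paper. Measured against that, your outline is essentially the strategy Chung--Moon actually follow: they connect $M_{4,1}$ to simpler spaces through a chain of smooth birational models coming from wall-crossing of $\alpha$-stability for pairs (in the Thaddeus/He/Choi--Chung--Maican style), each wall being a smooth blowup or blowdown, and they propagate the Chow ring via the blowup and projective bundle formulas; the classes $\alpha,\beta,x,y,z$ and the relations are extracted from this chain, and completeness of the relation ideal is checked against the known Poincar\'e polynomial. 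Your dimension-count argument for completeness is logically sound: if the five classes generate, the listed relations hold, and the Hilbert series of $\BQ[\alpha,\beta,x,y,z]/I$ equals the Poincar\'e polynomial, then $I$ is the whole ideal.

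That said, as a standalone proof your proposal has two concrete gaps. First, everything substantive is deferred: the actual chain of models for $(d,\chi)=(4,1)$, the identification of the five generators on each model, and above all the verification of the eleven listed relations are not carried out, and the two long relations in algebraic degrees $6$ and $7$ are not plausibly ``top Chern class vanishings'' --- in \cite{CM} they require explicit intersection-theoretic computation on the birational models, and this is where the bulk of the work sits. Second, your fallback of Atiyah--Bott localization and the Bia\l{}ynicki--Birula decomposition for the lifted $(\BC^\times)^2$-action needs the fixed locus of $M_{4,1}$, which consists of sheaves supported on torus-invariant (hence highly non-reduced) quartics and is itself a nontrivial moduli problem; you cannot treat it as a finite combinatorial check without first classifying those fixed components. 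The Poincar\'e polynomial of $M_{4,1}$ is indeed known, so that input is available, but it should be cited rather than rederived.
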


The class $\alpha$ can be described as the locus of $\mathcal{F}\in M_{4,1}$ such that a fixed point $p\in \BP^2$ lies in $\mathrm{supp}(\mathcal{F})$; see \cite[Proposition 7.8]{CM}. Otherwise said, it is the pull-back of a hyperplane class on $\mathbb{P}^{14} = \BP H^0(\BP^2, \CO_{\BP^2}(4))$ via $h: M_{4,1} \to \BP^{14}$. For geometric descriptions of the other generators, see Sections \ref{Sec2.1.1} and \ref{Sec2.3.2}, and also \cite[Section 7]{CM}. As we discuss in Section \ref{Sec2.3}, only the descriptions of $\beta$ and $z$ will be needed for our calculations.

\subsection{Comparing generators.}\label{Sec2.3}

The major part of this section consists of comparing the generators in Theorem \ref{thm2.1} with the five tautological generators
\begin{equation}\label{5generators}
c_0(2), c_2(0), c_1(2), c_2(1), c_3(0) \in H^{*}(M_{4,1}, \mathbb{Q})
\end{equation}
given in \cite{PS}.
\smallskip

To begin with, note that
\[
\alpha=c_0(2)
\]
by the discussion above, both being the pull-back of a hyperplane class on the base $\mathbb{P}^{14}$. Thus we write the two classes interchangeablely in what follows.

\medskip

To determine the remaining classes of Theorem \ref{thm2.1} in terms of (\ref{5generators}), we proceed by the following three steps:

\begin{enumerate}
    \item[(i)] Compute the classes $\beta$ and $z$ explicitly in terms of (\ref{5generators}); this is carried out in Sections \ref{Sec2.1.1} and \ref{Sec2.3.2}.
    \smallskip
    
    \item[(ii)] Compute the total Chern class $c(\mathcal{T}_M)$ of $M_{4,1}$ in terms of (\ref{5generators}); this is carried out in Section \ref{Sec2.3.3}.
    \smallskip
    
    \item[(iii)] Comparing the result of (ii) with \cite[Proposition 7.5]{CM}, we obtain two identities by taking the terms in $c(\mathcal{T}_M)$ of algebraic degrees $2$ and $3$. This allows us to find the expressions for $x$ and $y$ in terms of (\ref{5generators}).    
\end{enumerate}

\begin{rmk}
Most steps in (i,\,ii,\,iii) are technical calculations in intersection theory. For the first time reading this article, the reader may skip this part and jump to Theorem \ref{prop2.9} directly.
\end{rmk}

\subsubsection{The class $\beta$.}\label{Sec2.1.1} We first introduce some notation and recall Proposition \ref{prop2.2} from \cite{CM} which will be needed.

A general element $\mathcal{F}\in M_{4,1}$ has a unique nonzero section $s: \mathcal{O}_{\mathbb{P}^2} \to \mathcal{F}$ up to scalar multiplication, whose cokernel $Q_\mathcal{F}$ has finite support. We denote by $L$ the closure of the locus of $\mathcal{F}\in M_{4,1}$ such that $Q_\mathcal{F}$ meets a fixed line. Let $O$ be the closure of the locus of $\mathcal{F}\in M_{4,1}$ such that $Q_\mathcal{F}$ contains a fixed point. 

\begin{prop}[{\cite[Proposition 7.11]{CM}}]\label{prop2.2}
With the notation as above, we have:
\begin{enumerate}
    \item[(i)] $L=-\beta$ in $H^{2}(M_{4,1}, \mathbb{Q})$.
    \item[(ii)] $O=x-y$ in $H^4(M_{4,1}, \BQ)$.
\end{enumerate}
\end{prop}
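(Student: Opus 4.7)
The plan is to compute $[L]$ and $[O]$ via an explicit universal cokernel construction on $\BP^2\times M_{4,1}$ and then match the resulting expressions with Chung--Moon's generators $\beta, x, y$ through Chern-character algebra plus a few test intersections.

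First I would work on the open locus $U\subset M_{4,1}$ where $\CF\simeq \CO_C(D)$ for a smooth quartic $C$ and a degree-$3$ divisor $D$ with $h^0(D)=1$; its complement has codimension $\geq 2$, so any cohomology class produced on $U$ extends uniquely to $M_{4,1}$. On $U$, the pushforward $\pi_{M*}\BF$ is a line bundle $\CL$, and after normalizing $\BF':=\BF\otimes\pi_M^*\CL^{-1}$ we obtain a canonical universal section $\CO_{\BP^2\times U}\to\BF'$ factoring through the structure sheaf of the universal quartic $\CC\subset \BP^2\times U$, with cokernel a flat family $\CQ$ of length-$3$ zero-dimensional sheaves, fitting into
\[
0\to \CO_{\BP^2\times U}\to \CO_{\CC}\to \BF'\to \CQ\to 0.
\]

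Second, I would express $[L]$ and $[O]$ as universal pushforwards of $\CQ$ against the chosen incidence cycles. For the fixed line $\ell$ and point $p$ in $\BP^2$,
\[
[L]=\pi_{M*}\bigl(\pi_P^*[\ell]\cdot \mathrm{ch}(\CQ)\bigr)_{H^2},\qquad [O]=\pi_{M*}\bigl(\pi_P^*[p]\cdot \mathrm{ch}(\CQ)\bigr)_{H^4}.
\]
Using the exact sequence above, together with the Chern character of $\CO_{\CC}$ in terms of the universal quartic divisor and $\mathrm{ch}(\BF')=\mathrm{ch}(\BF)\cdot e^{-\pi_M^*c_1(\CL)}$, these pushforwards unfold into explicit polynomials in the $\delta$-twisted tautological classes $c_k^\delta(j)$, and hence in the normalized classes $c_k(j)$ of Section \ref{sec1.2}.

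Finally, to identify these tautological expressions with $-\beta$ and $x-y$ I would invoke Chung--Moon's construction of the generators: $\beta$ is the first Chern class of a natural polarization on $M_{4,1}$ (arising as a determinant line bundle along the universal quartic), while $x,y,z$ appear as Chern classes of universal bundles on the Grassmannian bundle underlying their GIT/birational description. With this dictionary the matching becomes a bookkeeping exercise; as a cross-check I would evaluate both sides of (i) and (ii) on convenient test curves, such as a Lefschetz pencil of quartics or a $\BP^1$-family obtained by varying $D$ within a linear system on a fixed smooth $C$. The main obstacle is precisely keeping the two normalizations aligned: the twist by $\CL$ used to build $\CQ$ versus Chung--Moon's choice of polarization defining $\beta$. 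The sign in (i) and the combination $x-y$ (rather than $x$ or $y$ alone) in (ii) should emerge from carefully tracking this discrepancy.
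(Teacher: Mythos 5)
The paper offers no proof of this statement: it is quoted directly from Chung--Moon \cite{CM}, Proposition 7.11, and is then used as an \emph{input} (combined with the determinant-line-bundle identity $D=-3\alpha+L$ of \cite{CC15}) to solve for $\beta$ in terms of the tautological classes. Your proposal is therefore an attempt at an independent proof, and as such it has two genuine gaps. The first is the reduction to the open set $U$. As you define it (smooth quartic support), the complement of $U$ contains the divisor over the discriminant in $\BP^{14}$, so it is not of codimension $\geq 2$; and even after repairing $U$ to be the complement of the Brill--Noether locus $\mathcal{C}_4=\{h^0(\CF)=2\}$ (where the canonical section, hence $\CQ$, genuinely fails to exist), that complement has codimension exactly $2$. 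Consequently the restriction $H^4(M_{4,1},\BQ)\to H^4(U,\BQ)$ kills the nonzero class $z=[\mathcal{C}_4]$, so your computation of $[O]$ on $U$ determines $O$ only modulo $\BQ\cdot z$ (and modulo the classes of any other codimension-$2$ boundary components). Since the assertion $O=x-y$ is precisely a statement that the $z$-coefficient vanishes, this ambiguity cannot be waved away. (Separately, the displayed four-term sequence is not exact as written; you want the two short exact sequences $0\to\mathcal{I}_{\CC}\to\CO\to\CO_{\CC}\to 0$ and $0\to\CO_{\CC}\to\BF'\to\CQ\to 0$, from which $\mathrm{ch}(\CQ)=\mathrm{ch}(\BF')-\mathrm{ch}(\CO_{\CC})$.)

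The second and more serious gap is the step you describe as ``bookkeeping.'' Expressing $[L]$ and $[O]$ in tautological classes is the routine half; the content of the proposition is the identification with Chung--Moon's generators $\beta$, $x$, $y$, which are defined through their specific wall-crossing/GIT presentation of $M_{4,1}$ and not by the loose descriptions you give (``a natural polarization,'' ``Chern classes of universal bundles on the Grassmannian bundle''). Without an independent, computable characterization of $\beta$, $x$, $y$ --- which is exactly what the present paper has to work to extract, in the other direction, in Sections 2.3.1--2.3.4 --- the matching cannot be carried out, and evaluating both sides on test curves can only corroborate the identities, not prove them.
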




Let $K(-)$ be the Grothendieck group of coherent sheaves. Consider the group homomorphism $\lambda: K(\BP^2) \to \mathrm{Pic}(M_{4,1})$ given by the following composition
\[
 K(\mathbb{P}^2) \to K(\mathbb{P}^2 \times M_{4,1})\to K(M_{4,1})\to \mathrm{Pic}(M_{4,1}).
\]
Explicitly, it is defined by 
\[
\lambda(\nu)=\det (q_* (\mathbb{F}\otimes p^* \nu)) \in \mathrm{Pic}(M_{4,1}), \ \  \nu \in K(\mathbb{P}^2).
\]
Here $\mathbb{F}$ is a fixed universal sheaf on $\mathbb{P}^2\times M_{4,1}$ and $p$ (resp. $q$) is the projection to the first (resp. second) factor. We write
\[
D:= \lambda(-4 \mathcal{O}_{\mathbb{P}^2} + \mathcal{O}_H) \in H^2(M_{4,1}, \BQ)
\]
with $H \subset \BP^2$ a hyperplane.

\begin{prop}[{\cite[Proposition 2.5]{CC15}}] Under the above notation, we have
\begin{equation}
    \label{D}
D=-3\alpha +L
\end{equation}
\end{prop}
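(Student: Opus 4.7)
The plan is to apply Grothendieck--Riemann--Roch to the smooth proper projection $q: \BP^2 \times M_{4,1} \to M_{4,1}$, then convert the output into the basis $\{\alpha, L\}$ of $\mathrm{Pic}(M_{4,1})_\BQ$. Since $D = c_1(\lambda(\nu))$ with $\nu = -4\mathcal{O}_{\BP^2} + \mathcal{O}_H$, and $c_1 \circ \det = \mathrm{ch}_1$ in K-theory, we have $D = \mathrm{ch}_1\bigl(Rq_*(\BF \otimes p^*\nu)\bigr)$, so GRR gives
\[
D = \Bigl[\, q_*\bigl(\mathrm{ch}(\BF) \cdot p^*(\mathrm{ch}(\nu) \cdot \mathrm{td}(T_{\BP^2}))\bigr)\,\Bigr]_{H^2(M_{4,1})}.
\]

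First I would expand the pullback side. From $\mathrm{ch}(\mathcal{O}_H) = 1 - e^{-H}$ we get $\mathrm{ch}(\nu) = -4 + H - \tfrac{1}{2}H^2$, and with $\mathrm{td}(T_{\BP^2}) = 1 + \tfrac{3}{2}H + H^2$ their product on $\BP^2$ equals $-4 - 5H - 3H^2$. Isolating the cohomological degree-$6$ component of $\mathrm{ch}(\BF)\cdot p^*(-4 - 5H - 3H^2)$ on the product and pushing down, one obtains
\[
D = -4\, q_*\mathrm{ch}_3(\BF) - 5\, q_*(\mathrm{ch}_2(\BF) \cdot p^*H) - 3\, q_*(\mathrm{ch}_1(\BF) \cdot p^*H^2),
\]
a concrete linear combination of unnormalized tautological classes in $H^2(M_{4,1}, \BQ)$.

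The remaining step, which I expect to be the main obstacle, is to re-express this combination in terms of $\alpha$ and $L$. For $\alpha$, the identification is transparent: since $\BF$ has rank zero on the product, there is no twist correction, so $q_*(\mathrm{ch}_1(\BF) \cdot p^*H^2) = c_0(2) = \alpha$ by Section \ref{sec1.2}, and thus the last term in the display above already contributes $-3\alpha$. For $L$, one realizes the class as $c_1\circ\lambda$ of an explicit K-class on $\BP^2$ by interpreting the Brill--Noether condition ``$Q_\CF$ meets a fixed line $\ell \subset \BP^2$'' as the degeneracy divisor of a morphism of vector bundles built from the relative cohomology of $\BF$ along $\ell \times M_{4,1}$, yielding $L = c_1(\lambda(\mu))$ for an explicit $\mu \in K(\BP^2)$.

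Once $\alpha$ and $L$ are both written in the form $c_1\circ\lambda$ of concrete K-classes on $\BP^2$, the desired identity $D = -3\alpha + L$ reduces to a linear equality in $K(\BP^2)_\BQ$ modulo $\ker(\lambda)$, which is verified by matching rank, $c_1$, and $\chi$; the combinations on the two sides agree because the terms $-4\, q_*\mathrm{ch}_3(\BF)$ and $-5\, q_*(\mathrm{ch}_2(\BF)\cdot p^*H)$ assemble precisely into $L$ while producing no further $\alpha$-contribution. As a numerical sanity check — useful if one wishes to bypass the determinantal description of $L$ — one can intersect both sides with two independent test curves in $M_{4,1}$ (e.g.\ a general fiber of $h$ together with a section-type curve) and verify the resulting scalar identities directly.
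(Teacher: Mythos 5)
The paper itself does not prove this statement: it is imported directly from \cite[Proposition 2.5]{CC15}, so there is no internal argument to compare against. Judged on its own terms, your proposal gets the routine half right. The GRR setup is correct, the factor $\mathrm{ch}(\nu)\cdot\mathrm{td}(T_{\BP^2})=-4-5H-3H^2$ checks out, and since $\mathrm{ch}_0(\BF)=0$ the term $-3\,q_*\bigl(p^*H^2\cdot\mathrm{ch}_1(\BF)\bigr)$ is indeed $-3\,c_0(2)=-3\alpha$ with no normalization correction. The resulting expression
\begin{equation*}
D \;=\; -4\, q_*\mathrm{ch}_3(\BF)\;-\;5\, q_*\bigl(p^*H\cdot\mathrm{ch}_2(\BF)\bigr)\;-\;3\alpha
\end{equation*}
is also well defined (the first two terms individually depend on the choice of $\BF$, but their combination does not, as one can check by twisting $\BF$ by $q^*\mathcal{L}$).

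The genuine gap is that the entire content of the proposition sits in the step you leave as an assertion: that $-4\,q_*\mathrm{ch}_3(\BF)-5\,q_*(p^*H\cdot\mathrm{ch}_2(\BF))$ equals the geometrically defined divisor $L$. You never produce the ``explicit $\mu\in K(\BP^2)$'' with $L=c_1(\lambda(\mu))$, and producing it is not routine: the cokernel $Q_\CF=\mathrm{coker}(s)$ is only defined (up to scalar) for $\CF$ with $h^0(\CF)=1$, so there is no global universal cokernel on $\BP^2\times M_{4,1}$; the proposed degeneracy-locus construction breaks down along the Brill--Noether locus $\mathcal{C}_4$, and one must argue separately that this codimension-$2$ locus does not affect the divisor class. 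Moreover, your closing criterion --- matching rank, $c_1$ and $\chi$ ``modulo $\ker\lambda$'' --- presupposes that $\ker\lambda$ is already known, which itself requires evaluating $\lambda$-classes against two independent test curves; the same issue affects the fallback ``sanity check,'' since exhibiting a section-type curve and computing $\deg L$ and $\deg D$ on it is precisely where the work lies (and is essentially what Choi--Chung do). In short, the skeleton is reasonable and the computations you actually carry out are correct, but the identification of the tautological expression with $L$ --- the actual theorem --- is missing.
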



\smallskip

Combining Proposition \ref{prop2.2}(i) and (\ref{D}), we deduce that
\begin{equation}
\label{beta}
\beta=-3\alpha-D.
\end{equation}
Since $\alpha = c_0(2)$, it suffices to express $D$ in terms of $c_k(j)$. For this purpose, we first present a general lemma that expresses the normalization class $\delta_0$ of Proposition \ref{prop1.2}:


\begin{lem}\label{lemma2.5}
For a general moduli space $M_{d,\chi}$, the normalization class $\delta_0$ is given by
\begin{equation}
    \label{delta}
    \delta_0=\left(\frac{3}{2}-\frac{\chi}{d}\right)\cdot H-\frac{1}{d}\left(\left(\frac{3}{2}-\frac{\chi}{d}\right)c_0(2)+e_1(1)\right),
\end{equation}
where $e_1(1):=\int_H \mathrm{ch}_2(\mathbb{F}) \in H^2(M_{d,\chi}, \mathbb{Q})$.
\end{lem}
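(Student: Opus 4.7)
\smallskip

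\noindent\textbf{Proof proposal.} The strategy is to use the two normalization conditions $c_1^{\delta_0}(0)=0$ and $c_1^{\delta_0}(1)=0$ from Proposition~\ref{prop1.2}(i), expressed in the un-normalized universal family $\BF$, as two linear equations in the unknowns $(\delta_P,\delta_M)$ and then solve them.

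First I would parametrize $\delta_0$. Since $H^2(\BP^2,\BQ)=\BQ\cdot H$, write $\delta_P=rH$ for some scalar $r\in\BQ$ and leave $\delta_M\in H^2(M_{d,\chi},\BQ)$ as an unknown class. Since $\BF$ is a pure $1$-dimensional sheaf on $\BP^2\times M_{d,\chi}$, the rank vanishes, $\mathrm{ch}_0(\BF)=0$, so the twisted components simplify to
\[
\mathrm{ch}_1^{\delta}(\BF)=\mathrm{ch}_1(\BF),\qquad \mathrm{ch}_2^{\delta}(\BF)=\mathrm{ch}_2(\BF)+\mathrm{ch}_1(\BF)\cdot\delta.
\]
Next I would record the elementary integrals along the $\BP^2$-fibers that appear, using $[\mathrm{supp}(\CF)]=dH$ and $\chi(\CF)=\chi$:
\begin{equation*}
\int_{\BP^2} \mathrm{ch}_1(\BF)\cdot H = d, \qquad \int_{\BP^2}\mathrm{ch}_2(\BF)=\chi-\tfrac{3d}{2},
\end{equation*}
the second being Riemann--Roch on $\BP^2$, together with $\pi_{M*}(\pi_P^*H^2\cdot\mathrm{ch}_1(\BF))=c_0(2)$ (which is independent of $\delta$ because $\mathrm{ch}_0(\BF)=0$), and $\pi_{M*}(\pi_P^*H^j\cdot\pi_M^*\delta_M)=0$ for $j<2$ while $=\delta_M$ for $j=2$.

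Then I would expand the two normalization conditions. The first condition reads
\[
c_1^{\delta}(0)=\pi_{M*}\bigl(\mathrm{ch}_2(\BF)\bigr)+r\cdot\pi_{M*}\bigl(\mathrm{ch}_1(\BF)\cdot\pi_P^*H\bigr)=\bigl(\chi-\tfrac{3d}{2}\bigr)+r\,d,
\]
which forces $r=\tfrac{3}{2}-\tfrac{\chi}{d}$, giving the stated $\delta_P$. The second condition reads
\[
c_1^{\delta}(1)=\pi_{M*}\bigl(\pi_P^*H\cdot\mathrm{ch}_2(\BF)\bigr)+r\cdot\pi_{M*}\bigl(\pi_P^*H^2\cdot\mathrm{ch}_1(\BF)\bigr)+d\cdot\delta_M=e_1(1)+r\,c_0(2)+d\,\delta_M,
\]
which forces $\delta_M=-\tfrac{1}{d}\bigl(e_1(1)+r\,c_0(2)\bigr)$. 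Substituting the value of $r$ yields exactly (\ref{delta}).

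Uniqueness is automatic from the linear-algebraic nature of the system, and is also covered by \cite[Proposition 1.2]{PS} invoked in the proof of Proposition \ref{prop1.2}(i). I do not anticipate a serious obstacle; the only bookkeeping point that requires care is distinguishing the two sources of contributions to $\pi_{M*}(\pi_P^*H^j\cdot\mathrm{ch}_i^{\delta}(\BF))$ -- the piece coming from $\mathrm{ch}_i(\BF)$ alone and the piece coming from the twist $\mathrm{ch}_{i-1}(\BF)\cdot\delta$ -- and being consistent that $\mathrm{ch}_0(\BF)=0$ kills the higher-order twists. Everything else is a direct evaluation using the numerical invariants $(d,\chi)$ of the sheaves parametrized by $M_{d,\chi}$.
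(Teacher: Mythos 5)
Your proposal is correct and is essentially the calculation the paper has in mind: the paper's proof simply says "direct calculation following the proof of \cite[Proposition 1.2]{PS}," and that calculation is exactly the one you carry out — imposing $c_1^{\delta_0}(0)=c_1^{\delta_0}(1)=0$, using $\mathrm{ch}_0(\BF)=0$ so that only the linear twist term survives, and solving the resulting triangular linear system for $\delta_P$ and $\delta_M$ via $\int_{\BP^2}\mathrm{ch}_2(\CF)=\chi-\tfrac{3d}{2}$ and $\pi_{M*}(\pi_P^*H^2\cdot\mathrm{ch}_1(\BF))=c_0(2)$. The only cosmetic point is to be consistent with the indexing convention $c_k(j)=\int_{H^j}\mathrm{ch}_{k+1}^{\delta}(\BF)$ (as dictated by the stated cohomological degrees), which your equations already respect.
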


\begin{proof}
This is a direct calculation following the proof of \cite[Proposition 1.2]{PS}.
\end{proof}

The precise expression of the second term in (\ref{delta}) is of little significance for our purpose, and thus we write simply $\delta_0=\left(\frac{3}{2}-\frac{\chi}{d}\right)\cdot H -\gamma$ from now on. Specializing to $M_{4,1}$, we have
\[
\delta_0=\frac{5}{4}H-\gamma \in H^2(\mathbb{P}^2 \times M_{4,1}, \BQ).
\]

\smallskip

Now we can compute the class $D\in H^2(M_{4,1}, \mathbb{Q})$. By definition, we have
\begin{align*}
D=\lambda(-4 \mathcal{O}_{\mathbb{P}^2} + \mathcal{O}_H)&=\mathrm{ch}_1\big(\det (q_* (\mathbb{F}\otimes p^*(-4 \mathcal{O}_{\mathbb{P}^2}+ \mathcal{O}_H)))\big)\\&= \mathrm{ch}_1\big(q_* (\mathbb{F}\otimes p^*(-4 \mathcal{O}_{\mathbb{P}^2}+ \mathcal{O}_H))\big)
\\&= -4\cdot \mathrm{ch}_1(q_* \mathbb{F})+\mathrm{ch}_1\big(q_* (\mathbb{F}\otimes p^*\mathcal{O}_{H})\big).
\end{align*}
Using the Grothendieck--Riemann--Roch theorem and the projection formula, we obtain
\begin{align*}
\mathrm{ch}(q_*\mathbb{F})&=q_*(\mathrm{ch}(\mathbb{F})\cdot \mathrm{td}(\mathbb{P}^2))\\&= q_*\left(\mathrm{ch}^{\delta_0}(\mathbb{F}) \cdot \exp\left(-\frac{5}{4}H\right)\cdot \mathrm{td}(\mathbb{P}^2)\right)\cdot \exp(\gamma).
\end{align*}

Since $\mathrm{td}(\mathbb{P}^2)=1+\frac{3}{2}H +H^2$, it follows that
\[
\mathrm{ch}_1(q_* \mathbb{F})=-\frac{3}{32}c_0(2)+c_2(0) +\gamma.
\]
Similarly, we have
\[
\mathrm{ch}_1(q_*\mathbb{F}\otimes \mathcal{O}_H)=-\frac{1}{4} c_0(2) + 4\gamma.
\]
Therefore we arrive at
\[
D=-4\cdot \mathrm{ch}_1(q_* \mathbb{F})+\mathrm{ch}_1\big(q_* (\mathbb{F}\otimes p^*\mathcal{O}_{H})\big)=\frac{1}{8} c_0(2) -4 c_2(0).
\]

We conclude from (\ref{beta}) that
\[
\beta=-\frac{25}{8}c_0(2) + 4c_2(0).
\]

\subsubsection{The class $z$.}\label{Sec2.3.2} We compute the class $z$ by an application of the Porteous formula. Denote by $\mathcal{C}_4$ the universal quartic curve in $\mathbb{P}^2 \times \mathbb{P}^{14}$. This sits naturally in $M_{4,1}$ and can be described as the \textit{Brill--Noether locus} of sheaves $\mathcal{F}\in M_{4,1}$ with $\dim H^0(\mathbb{P}^2, \mathcal{F})=2$; see \cite{DM11}. For a sheaf not belonging to $\mathcal{C}_4$, we have $\dim H^0(\mathbb{P}^2, \mathcal{F})=1$. The following proposition gives a geometric description of the class $z$:

\begin{prop}[{\cite[Proposition 7.7]{CM}}]
\label{prop2.6} We have $z=[\mathcal{C}_4]$ in $H^{*}(M_{4,1}, \mathbb{Q})$.
\end{prop}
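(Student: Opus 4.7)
The plan is to realize $\mathcal{C}_4$ as a Brill--Noether degeneracy locus in $M_{4,1}$ and compute its fundamental class via the Thom--Porteous formula. Since $\chi(\mathcal{F})=1$ for every $\mathcal{F}\in M_{4,1}$, the condition $h^0(\mathbb{P}^2,\mathcal{F})\geq 2$ is equivalent to $h^1(\mathbb{P}^2,\mathcal{F})\geq 1$, a rank-jumping condition of expected codimension~$2$. A preliminary step is to check set-theoretically that this jumping locus coincides with $\mathcal{C}_4\hookrightarrow M_{4,1}$: for a smooth quartic $C$ of genus~$3$, Riemann--Roch and Serre duality $L\mapsto K_C\otimes L^{-1}$ identify the classical Brill--Noether locus $W^1_3(C)$ with $C$ itself, which globalizes to the universal quartic in $\mathbb{P}^2\times \mathbb{P}^{14}$.

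Next I would choose a two-term complex of locally free sheaves $[A\xrightarrow{\phi}B]$ on $M_{4,1}$ representing $R\pi_\ast\mathbb{F}$, with $\mathrm{rk}(A)-\mathrm{rk}(B)=\chi(\mathbb{F})=1$. Such a resolution is standard: tensor $\mathbb{F}$ with $p^\ast\mathcal{O}_{\mathbb{P}^2}(n)$ for $n\gg 0$ so that the pushforward becomes locally free with vanishing $R^1$, then untwist via a Koszul/Beilinson-type resolution of $\mathcal{O}_{\mathbb{P}^2}$. The jumping locus coincides with $\{x\in M_{4,1}\mid \mathrm{rk}\,\phi_x\leq \mathrm{rk}(A)-2\}$, so Thom--Porteous yields
\[
[\mathcal{C}_4]\,=\,c_2(B-A)\,=\,c_2(-R\pi_\ast\mathbb{F})\,=\,c_1(R\pi_\ast\mathbb{F})^2-c_2(R\pi_\ast\mathbb{F})\,\in\,H^4(M_{4,1},\mathbb{Q}).
\]
The right-hand side is then expanded via Grothendieck--Riemann--Roch, $\mathrm{ch}(R\pi_\ast\mathbb{F})=\pi_\ast(\mathrm{ch}(\mathbb{F})\cdot\mathrm{td}(\mathbb{P}^2))$, following the same template as the computation of $D$ in Section~\ref{Sec2.1.1}, and converted to Chern classes via Newton's identities; this expresses $[\mathcal{C}_4]$ as an explicit polynomial in the normalized tautological classes $c_k(j)$.

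The main obstacle is identifying this tautological expression with the abstract generator $z$ of Theorem~\ref{thm2.1}. At this point in the argument, $x$ and $y$ have not yet been expressed in terms of tautological classes (this is exactly what Section~\ref{Sec2.3.3} will do through the total Chern class $c(\mathcal{T}_M)$), so a direct monomial comparison is not yet available. I would therefore bypass this by a numerical check: pair both $z$ and the Porteous expression for $[\mathcal{C}_4]$ against a basis of the complementary cohomology $H^{2\dim M_{4,1}-4}(M_{4,1},\mathbb{Q})$, evaluate using the ring structure of Theorem~\ref{thm2.1} together with intersection numbers on $M_{4,1}$ (as in \cite{CM}), and compare the resulting scalars. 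Since the degree-$4$ part of the Chow ring is explicitly given, this reduces $z=[\mathcal{C}_4]$ to a finite linear-algebra verification in $H^4(M_{4,1},\mathbb{Q})$.
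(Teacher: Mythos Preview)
This proposition is not proved in the paper at all: it is quoted verbatim from \cite[Proposition 7.7]{CM}, where $z$ is defined through Chung--Moon's explicit birational construction of $M_{4,1}$ and identified with the Brill--Noether locus there. The paper simply accepts $z=[\mathcal{C}_4]$ as input. What the paper \emph{does} do, immediately after citing the proposition, is precisely your Porteous computation: it represents $Rq_*\mathbb{F}$ by a two-term complex $K^0\to K^1$, applies Thom--Porteous to get
\[
[\mathcal{C}_4]=\tfrac{1}{2}\mathrm{ch}_1(q_*\mathbb{F})^2-\mathrm{ch}_2(q_*\mathbb{F}),
\]
and then expands by GRR to write $z$ in terms of the tautological generators $c_k(j)$. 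So the bulk of your proposal is the \emph{application} of Proposition~\ref{prop2.6}, not its proof.

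The only part of your proposal that genuinely addresses the statement itself is your final step, the numerical pairing against a basis of $H^{2\dim M_{4,1}-4}$. This is in principle a valid alternative route, but note the logical dependency: to pair your Porteous expression for $[\mathcal{C}_4]$ (written in the $c_k(j)$) against monomials in $\alpha,\beta,x,y,z$, you need the dictionary between the two sets of generators, which in the paper's logic is derived \emph{using} Proposition~\ref{prop2.6}. To avoid circularity you would have to compute top tautological intersection numbers on $M_{4,1}$ by an independent method (e.g.\ equivariant localization) and compare with the numbers coming from the ring presentation of Theorem~\ref{thm2.1}. That is doable but is a substantially different argument from the one in \cite{CM}, which instead tracks $z$ through their sequence of blow-ups and flips.
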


The Brill--Noether locus $\mathcal{C}_4$ has codimension two and can be viewed as a degeneracy locus of a map between vector bundles, as we explain now. Fix a universal sheaf $\mathbb{F}$ on $\mathbb{P}^2 \times M_{4,1}$, and consider the second projection $q: \mathbb{P}^2 \times M_{4,1} \to M_{4,1}$. The derived push-forward 
\[
{R}q_* \mathbb{F}\in D^b\mathrm{Coh}(M_{4,1})
\]
admits a two-term resolution $\phi:K^0 \to K^1$ by vector bundles, as it computes the cohomology groups on curves. For a sheaf $\mathcal{F}\in M_{4,1}$ supported on a curve $C$, we have the exact sequence
\[
0\to H^0(C, \mathcal{F})\to K^0(\mathcal{F}) \xrightarrow{\phi(\mathcal{F})} K^1(\mathcal{F}) \to H^1(C, \mathcal{F}) \to 0.
\]
Denote by $e$ the rank of the vector bundle $K^0$. We have
\[
\chi(\mathcal{F})=\dim H^0(C,\mathcal{F})-\dim H^1(C, \mathcal{F})=1,
\]
and thus $f:=\operatorname{rank} K^1(\CF)=e-1$. Recall that
\[
\mathcal{C}_4=\{\mathcal{F} \in M_{4,1}\mid \dim H^0(\mathbb{P}^2, \mathcal{F})=\dim H^0(C, \mathcal{F})=2\};
\]
we see that $\mathcal{C}_4$ coincides with the degeneracy locus $M_{e-2}(\phi)$ where the map $\phi: K^0 \to K^1$ between vector bundles has rank $\leq e-2$. Moreover, it has the expected codimension
\[
(e-(e-2))(f-(e-2))=2.
\]
Thus by the Porteous formula, we obtain that
\begin{align*}
[\mathcal{C}_4]=[M_{e-2}(\phi)]&=\Delta_1^2\left[\frac{c(K^1)}{c(K^0)}\right]\\&= c_1(K^1-K^0)^2-c_2(K^1-K^0)\\&= c_1(-q_*\mathbb{F})^2-c_2(-q_*\mathbb{F})\\&=\frac{1}{2}\mathrm{ch}_1(q_*\mathbb{F})^2-\mathrm{ch}_2(q_*\mathbb{F}).
\end{align*}

\smallskip

On the other hand, we have computed $\mathrm{ch}(q_*\mathbb{F})$ in Section \ref{Sec2.1.1}:
\[
\mathrm{ch}(q_* \mathbb{F})=1-\frac{3}{32}c_0(2) + c_2(0) +\gamma -\frac{3}{32}c_1(2) +\frac{1}{4} c_2(1) + c_3(0) +\gamma \cdot \big(-\frac{3}{32}c_0(2) +c_2(0)\big)+\frac{\gamma^2}{2}+ \cdots,
\]
where the omitted terms have algebraic degrees $\geq 3$. We conclude by Proposition \ref{prop2.6} that
\[
z=-c_3(0)-\frac{1}{4}c_2(1)+\frac{3}{32}c_1(2)+\frac{1}{2}(c_2(0)-\frac{3}{32}c_0(2))^2.
\]

\subsubsection{The total Chern class.} \label{Sec2.3.3} The goal of this subsection is to compute the total Chern class $c(\mathcal{T}_{M})$ in terms of (\ref{5generators}). As before, we fix a universal sheaf $\mathbb{F}$ on $\mathbb{P}^2 \times M_{4,1}$. Since $M_{4,1}$ is a smooth projective variety, the tangent space at a sheaf $\mathcal{F} \in M_{4,1}$ is given by
\[
\mathcal{T}_\mathcal{F}=\mathrm{Ext}^1(\mathcal{F}, \mathcal{F}).
\]

Consider the object ${R}\mathcal{H} \kern -1.2pt \mathit{om}(\mathbb{F}, \mathbb{F}) \in D^b\mathrm{Coh}(\mathbb{P}^2\times M_{4,1})$. The derived push-forward
\[
{R}q_*{R}\mathcal{H} \kern -1.2pt \mathit{om}(\mathbb{F}, \mathbb{F}) \in D^b \mathrm{Coh}(\mathbb{P}^2 \times M_{4,1})
\]
admits a three-term resolution $L^0\to L^1 \to L^2$ by vector bundles. For a sheaf $\mathcal{F}\in M_{4,1}$, the $i$-th cohomology of the sequence
\begin{equation}
L^\bullet(\mathcal{F}):\; 0\to L^0(\mathcal{F}) \to L^1(\mathcal{F}) \to L^2(\mathcal{F})\to 0
\end{equation}
computes the extension group $\mathrm{Ext}^i(\mathcal{F}, \mathcal{F})$. We have
\[
H^0(L^\bullet(\mathcal{F}))=\mathrm{Hom}(\mathcal{F}, \mathcal{F})\simeq \mathbb{C}
\]
by the stability of $\mathcal{F}$. The first cohomology is 
\[
H^1(L^\bullet(\mathcal{F}))=\mathrm{Ext}^1(\mathcal{F}, \mathcal{F})=\mathcal{T}_\mathcal{F}.\] 
For the second cohomology, we have by Serre duality
\[
H^2(L^\bullet(\mathcal{F}))=\mathrm{Ext}^2(\mathcal{F}, \mathcal{F})\simeq \mathrm{Hom}(\mathcal{F}, \mathcal{F}\otimes \mathcal{O}_{\mathbb{P}^2}(-3))^\vee=0,
\]
where the last equality again results from stability of $\mathcal{F}$. It follows that
\[
 \mathcal{T}_M= - {R}q_*{R}\mathcal{H} \kern -1.2pt \mathit{om}(\mathbb{F}, \mathbb{F}) +\CO_{M_{4,1}} \in K(M_{4,1}).
\]
Taking Chern characters, we obtain
\[
\mathrm{ch}(\mathcal{T}_M)=-\mathrm{ch}(q_*( \mathbb{F}^\vee \otimes^{L} \mathbb{F}))+1.
\]
We calculate using Grothendieck--Riemann--Roch that
\begin{align*}
\mathrm{ch}(q_*( \mathbb{F}^\vee \otimes^{L} \mathbb{F}))&=q_*(\mathrm{ch}( \mathbb{F}^\vee \otimes^{L} \mathbb{F})\cdot \mathrm{td}(\mathbb{P}^2))\\&= q_* (\mathrm{ch}(\mathbb{F}^\vee)\cdot \mathrm{ch}(\mathbb{F})\cdot \mathrm{td}(\mathbb{P}^2))\\&=
q_* (\mathrm{ch}^{\delta_0}(\mathbb{F}^\vee)\cdot \mathrm{ch}^{\delta_0}(\mathbb{F})\cdot \mathrm{td}(\mathbb{P}^2)),
\end{align*}
where we write in the last term
\[
\mathrm{ch}^{\delta_0}(\mathbb{F}^\vee):=\mathrm{ch}(\mathbb{F}^\vee)\cdot \exp(-\delta_0)=\sum_{k\geq 1} (-1)^k \mathrm{ch}^{\delta_0}_k (\mathbb{F}).
\]
Therefore, we get
\begin{align*}
\mathrm{ch}(\mathcal{T}_M)&=-q_* (\mathrm{ch}^{\delta_0}(\mathbb{F}^\vee)\cdot \mathrm{ch}^{\delta_0}(\mathbb{F})\cdot \mathrm{td}(\mathbb{P}^2))+1\\&=17+12 c_0(2)+(c_0(2)^2+8c_2(1)+2c_0(2)c_2(0))+(12c_2(2)+3c_0(2)c_2(1))+ \cdots,
\end{align*}
where the omitted terms have algebraic degrees $\geq 4$. Hence the total Chern class is
\small{\begin{equation}\label{total}
c(\mathcal{T}_M)=1+ 12 \alpha +(71\alpha^2 -8c_2(1) -2\alpha c_2(0)) + (24 c_2(2)-90 \alpha c_2(1)-24 \alpha^2 c_2(0)+276 \alpha^3)+\cdots.
\end{equation}}

\smallskip

\normalsize
So far, we have already written $c(\mathcal{T}_M)$ in terms of $c_k(j)$. It remains to express $c_2(2)$ in terms of the five tautological generators (\ref{5generators}). This is achieved by an explicit computation using \cite[Proposition 2.6]{PS}. We state a modified version for tautological classes on $M_{4,1}$ here; see \textit{loc. cit.} for the setup and notation.

\begin{prop}
For every $\ell \geq 5$ and $n\in \{1,2,3\}$, the following identity holds for $M_{4,1}$:
\begin{equation}
 \label{relation}
 \sum_\mathbf{m} \prod_{s=1}^\ell \frac{((s-1)!)^{m_s}}{(m_s)!}\left(\sum_{\substack{i\geq 0}} (-1)^i \frac{\pi_M^*\gamma^i}{i!} \pi_M^* A_{s-i}- \frac{(\pi_R^*\beta+\pi_M^*\gamma)^i}{i!}(-1)^i \pi_M^* B_{s-i}\right)^{m_s}=0.
 \end{equation}
 Here, the first sum is over all $\ell$-tuple of non-negative integers $\mathbf{m}=(m_1,m_2,\ldots, m_\ell)$ such that $m_1+2m_2+\cdots +\ell m_\ell=\ell$, and writing $\widetilde{c}_s(j):=(-1)^{s+1}c_s(j)$, the terms $A_s, B_s$ are given by
\begin{align*}
    A_s &:=\widetilde{c}_{s+1}(0)+\left(\frac{11}{4}-n\right)\widetilde{c}_{s}(1)+\left(\frac{1}{2}n^2-\frac{11}{4}n+\frac{117}{32}\right)\widetilde{c}_{s-1}(2)\in H^{2s}({M_{4,1}}, \mathbb{Q}),\\
    B_s &:=\widetilde{c}_{s+1}(0)+\left(\frac{7}{4}-n\right)\widetilde{c}_{s}(1)+\left(\frac{1}{2}n^2-\frac{7}{4}n+\frac{45}{32}\right)\widetilde{c}_{s-1}(2)\in H^{2s}(M_{4,1}, \mathbb{Q}).
\end{align*}
\end{prop}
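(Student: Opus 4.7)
The plan is to deduce this identity as a specialization of \cite[Proposition 2.6]{PS} to the pair $(d,\chi)=(4,1)$, as the proposition itself is described as a modified version of that general result. In \textit{loc.\ cit.} the universal tautological relations on $M_{d,\chi}$ arise from the vanishing of high-degree Chern classes of a rank-$d$ virtual bundle $E$ on a product $R\times M_{d,\chi}$ obtained from push-forwards of twisted copies of the universal sheaf $\BF$. For $d=4$ one has $c_\ell(E)=0$ whenever $\ell>4$, which accounts precisely for the hypothesis $\ell\geq 5$.

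First I would recognize that the outer partition sum
\[
\sum_{\mathbf{m}}\prod_{s=1}^\ell \frac{((s-1)!)^{m_s}}{m_s!}\,X_s^{m_s}
\]
is the Newton--Girard expression for $c_\ell(E)$ in terms of the Chern characters $\mathrm{ch}_s(E)=X_s$, coming from
\[
\log c(E,t)=\sum_{s\geq 1}(-1)^{s-1}(s-1)!\,\mathrm{ch}_s(E)\,t^s.
\]
The bracketed quantity in the statement must therefore be identified with $\mathrm{ch}_s(E)$: its two summands correspond to the two pieces of a two-term resolution of $E$ constructed in \cite{PS}, the restriction $0\leq i\leq 2$ records that $R$ is two-dimensional (so $\gamma^{3}=0$), and the substitution $\gamma\mapsto \beta+\gamma$ together with the swap $A_j'\mapsto B_j'$ reflects an additional $\CO(\beta)$-twist of the second piece.

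Next, I would perform the Grothendieck--Riemann--Roch computation that pins down the precise forms of $A_s'$ and $B_s'$. Using the normalization $\delta_0=\tfrac{5}{4}H-\gamma$ from Lemma \ref{lemma2.5} (specializing $\tfrac{3}{2}-\tfrac{\chi}{d}=\tfrac{5}{4}$ at $(d,\chi)=(4,1)$) together with $\mathrm{td}(\BP^2)=1+\tfrac{3}{2}H+H^2$, I expand $\mathrm{ch}^{\delta_0}(\BF)\cdot\mathrm{td}(\BP^2)\cdot\exp(nH)$ and read off the component in each cohomological degree. The quadratic polynomials $\tfrac{1}{2}n^2-\tfrac{11}{4}n+\tfrac{117}{32}$ and $\tfrac{1}{2}n^2-\tfrac{7}{4}n+\tfrac{45}{32}$ should emerge as $H^2$-coefficients of $\exp(nH)\cdot\mathrm{td}(\BP^2)\cdot\exp(-\delta_0)$ for the two relevant twists; the linear terms $\tfrac{11}{4}-n,\tfrac{7}{4}-n$ come from the degree-one part of the same product. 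The range $n\in\{1,2,3\}$ is dictated by the requirement that the relevant pushforward be a genuine rank-$4$ vector bundle, so that high-degree Chern class vanishing applies.

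The only substantive work beyond invoking \cite[Prop.~2.6]{PS} is this bookkeeping step: one must verify that the Taylor expansion of the twisted Chern-character--Todd-class product produces exactly the stated $A_s',B_s'$, and that the Newton--Girard translation preserves all signs and factorials correctly. The computation is purely mechanical, so the main obstacle is keeping track of the rational coefficients rather than any conceptual difficulty.
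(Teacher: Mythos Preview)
Your proposal is correct and follows essentially the same approach as the paper: the paper's proof is a one-line deferral to \cite[Proposition~2.6]{PS}, noting only that the twisted Chern character $\mathrm{ch}^{\delta_0}(\BF^\vee)=\mathrm{ch}(\BF^\vee)\cdot\exp(-\delta_0)$ with $\delta_0$ from Lemma~\ref{lemma2.5} must be used, and your outline is simply a more explicit unpacking of what that entails (Newton--Girard identity for $c_\ell$, rank-$4$ vanishing giving $\ell\geq 5$, GRR bookkeeping producing the stated $A_s',B_s'$).
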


\begin{proof}
The proof follows almost line by line as in \cite[Proposition 2.6]{PS}, except that we use the twisted Chern character
$\mathrm{ch}^{\delta_0}(\mathbb{F}^\vee)=\mathrm{ch}(\mathbb{F}^\vee)\cdot \exp(-\delta_0)$ by the class $\delta_0$ given in Lemma \ref{lemma2.5}. The classes $\widetilde{c}_s(j)$ show up since we take the Chern character of the dual universal sheaf.
\end{proof}

\smallskip

Now we take $\ell=5$ and integrate (\ref{relation}) with respect to $\pi_R^* (\mathbf{1}_{\mathbb{P}^2})$ as in \cite[Section 2.3]{PS}. This leads to a relation in $H^6(M_{4,1},\mathbb{Q})$ of the form
\begin{equation}\label{H6}
C_1 + C_2 \gamma +C_3 \gamma^2 + C_4 \gamma^3=0,
\end{equation}
where $C_2, C_3$ are expressions entirely in terms of the tautological generators (\ref{5generators}), and $C_4\in \mathbb{Q}$ is a constant. The vanishing holds for \textit{every} universal sheaf $\mathbb{F}$ and the class $\gamma$ obtained from the normalization class $\delta_0$ associated with $\mathbb{F}$. In particular, if we write $\mathcal{L}$ to be the line bundle on $M_{4,1}$ corresponding to the divisor $c_0(2)$ and replace $\mathbb{F}$ with 
\[
\mathbb{F}':= \mathbb{F}\otimes q^* \mathcal{L}^{\otimes m}
\]
for $m\in \mathbb{Z}$, a straightforward computation shows that
\[
\gamma'=\gamma+ {m}\cdot c_0(2).
\]
Since (\ref{H6}) holds for all these $\gamma$, we deduce that $C_2=C_3=C_4=0$, using the fact that there are no relations among the tautological generators in $H^{*\leq 6}(M_{4,1},\mathbb{Q})$, \textit{c.f.} \cite[Section 3.2]{PS}. Therefore, we can actually set $\gamma=0$ in (\ref{relation}) for the computation. Setting $n=1,2,3$, we obtain as in \cite[Section 2.3]{PS} three linearly independent relations in $H^{6}(M_{4,1},\mathbb{Q})$, whose linear combinations give the following relations:
\begin{align*}
c_2(2)=&\; \frac{32}{3}c_3(0)c_2(0)-\frac{28}{3} c_3(0)c_0(2)-\frac{1}{4}c_2(1)c_0(2) -4c_1(2)c_2(0) +\frac{93}{32} c_1(2)c_0(2) \\&+ \frac{3}{8}c_2(0) c_0(2)^2+\frac{47}{768}c_0(2)^3.\\
    c_3(1)=&\; -4c_3(0)c_2(0) +\frac{35}{8}c_3(0)c_0(2)+\frac{15}{8} c_1(2) c_2(0) -\frac{405}{256}c_1(2)c_0(2)-\frac{55}{512}c_0(2)^3.\\
    c_4(0)=&\;\frac{7}{3}c_3(0)c_2(0)-\frac{49}{24}c_3(0) c_0(2)  +\frac{1}{12}c_2(1)c_2(0) -\frac{1}{24}c_2(1) c_0(2) -\frac{7}{8}c_1(2)c_2(0)\\&+\frac{691}{1024}c_1(2) c_0(2) +\frac{1}{48}c_2(0)^2 c_0(2)+\frac{1}{32}c_0(2)^2 c_2(0)+\frac{537}{16384}c_0(2)^3.
\end{align*}

\smallskip

This provides the desired expression of $c_2(2)$ in terms of (\ref{5generators}). Consequently, we obtain from (\ref{total}) an expression of $c(\mathcal{T}_M)$ in terms of (\ref{5generators}).
\normalsize
\subsubsection{The classes $x$ and $y$.}\label{Sec2.3.4}

We will use the following result in \cite{CM}, which expresses $c(\mathcal{T}_M)$ in terms of the generators $\alpha, \beta, x,y,z$.

\begin{prop}[{\cite[Proposition 7.5]{CM}}]\label{prop2.8}
The total Chern class of $M_{4,1}$ is given by
\begin{align*}
c(\mathcal{T}_M) &= 1 + 12\alpha + (66\alpha^2 -3 \alpha \beta -3 \beta^2 +6x +2y +34z)+(220\alpha^3 -33\alpha^2 \beta -33\alpha \beta^2 -4\beta^3 \\& + 60 \alpha x -6\beta x + 30 \alpha y+ 22 \beta y +414 \alpha z + 22\beta z) + \textnormal{terms of algebraic degrees $\geq 4$.}
\end{align*}
\end{prop}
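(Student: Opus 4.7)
The plan is to compute $c(\mathcal{T}_{M_{4,1}})$ from the standard deformation-theoretic description of the tangent bundle and then translate the outcome into the geometric basis $\{\alpha,\beta,x,y,z\}$. Since $M_{4,1}$ is a smooth fine moduli space of stable sheaves with $\mathrm{Hom}(\mathcal{F},\mathcal{F})\cong \mathbb{C}$ and $\mathrm{Ext}^2(\mathcal{F},\mathcal{F})=0$ (by Serre duality together with stability, exactly as reviewed in Section 2.3.3), one has in K-theory
\[
[\mathcal{T}_M] \;=\; [\mathcal{O}_M]\;-\;[Rq_*R\mathcal{H}\!\mathit{om}(\mathbb{F},\mathbb{F})],
\]
and Grothendieck--Riemann--Roch produces an explicit expression for $\mathrm{ch}(\mathcal{T}_M)$ as $-q_*\bigl(\mathrm{ch}(\mathbb{F}^\vee)\cdot\mathrm{ch}(\mathbb{F})\cdot\mathrm{td}(\mathbb{P}^2)\bigr)+1$, which is tautological and independent of the choice of universal family (twisting $\mathbb{F}$ by a line bundle only shifts $\mathbb{F}^\vee\otimes \mathbb{F}$ by the zero class). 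This is precisely the computation carried out in Section 2.3.3, ultimately yielding the expansion displayed in equation (\ref{total}).

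The second step is to turn this tautological expression into $\{\alpha,\beta,x,y,z\}$. The degree-one term is easiest: $c_1(\mathcal{T}_M)$ is the anticanonical class, and since the Hilbert--Chow map pulls back the hyperplane to $\alpha$ and the Fano index can be read off on a general fiber of $h$, one obtains $c_1(\mathcal{T}_M)=12\alpha$. In degree two, one uses the geometric dictionary: $\alpha=c_0(2)$, the identification of $\beta$ via $L=\lambda(-4\mathcal{O}_{\mathbb{P}^2}+\mathcal{O}_H)$ from Sections 2.3.1 and 2.1.1, and the Porteous-formula description of $z=[\mathcal{C}_4]$ from Section 2.3.2. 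After substituting these into (\ref{total}), the only geometric classes left undetermined on the right-hand side of the target formula are the codimension-two generators $x,y$; their expressions can be pinned down by intersecting both sides against enough test classes (for instance, curves in fibers of $h$, or the Brill--Noether cycle $\mathcal{C}_4$) and solving the resulting linear system in the cohomology ring described by Theorem \ref{thm2.1}.

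For degree three, the same strategy applies: express $c_3(\mathcal{T}_M)$ tautologically via GRR, apply the relation reducing $c_2(2)$ (derived from Proposition 2.6 of \cite{PS} and used in Section 2.3.3) to bring every term into the five basic tautological generators, and then convert using the already-established identifications of $\alpha,\beta,x,y,z$. A final check is available at the extremes: $c_1=12\alpha$ matches the Fano index, and $\int_{M_{4,1}} c_{11}(\mathcal{T}_M)$ recovers the topological Euler characteristic of $M_{4,1}$, which is independently known.

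The main obstacle is the translation in the second step: tautological classes and the geometric generators $x,y$ live on two different presentations of $H^*(M_{4,1},\mathbb{Q})$, so one must either identify $x$ and $y$ with explicit tautological expressions (by interpreting them as appropriate degeneracy loci of sheaves derived from $\mathbb{F}$), or alternatively solve for the coefficients in the target formula numerically via a sufficient supply of intersection numbers. Either route is technically delicate because the relations in Theorem \ref{thm2.1} are intricate and one must be careful not to lose information by working modulo them prematurely.
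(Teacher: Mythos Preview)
This proposition is not proved in the present paper at all: it is simply quoted from Chung--Moon \cite[Proposition 7.5]{CM}, where it is obtained by an entirely different method. In \cite{CM} the moduli space $M_{4,1}$ is constructed explicitly as a sequence of blow-ups and flips starting from a projective bundle, the generators $\alpha,\beta,x,y,z$ are defined directly in terms of this birational construction, and $c(\mathcal{T}_M)$ is then computed via the standard blow-up formulas for Chern classes. No universal family, GRR, or tautological classes enter that argument.

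Your proposal has a genuine circularity problem. You compute $c(\mathcal{T}_M)$ in the tautological basis $c_k(j)$ (this is fine, and is exactly what the paper does in \S\ref{Sec2.3.3}), but then you want to convert the result into $\alpha,\beta,x,y,z$. For $\alpha,\beta,z$ you correctly cite the independent identifications of \S\ref{Sec2.1.1}--\ref{Sec2.3.2}. But for $x$ and $y$ the only dictionary available in the paper is the one in \S\ref{Sec2.3.4}, and that dictionary is obtained precisely by \emph{comparing} the GRR expression (\ref{total}) with the formula of Proposition \ref{prop2.8}. So you cannot use those expressions for $x,y$ to prove Proposition \ref{prop2.8} without assuming what you are trying to show. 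Your final paragraph recognizes this obstacle, but the proposed fixes (interpret $x,y$ as degeneracy loci, or determine coefficients by intersection numbers) are not carried out and are not straightforward: the classes $x,y$ in \cite{CM} arise from the birational construction, not from any evident degeneracy locus on the universal family, and producing enough independent intersection numbers would essentially require redoing the Chung--Moon computation.
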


Comparing the terms of algebraic degrees $2$ and $3$ in this expression with (\ref{total}), we obtain
\begin{align*}
    &71\alpha^2 -8c_2(1) -2\alpha c_2(0)=66\alpha^2 -3 \alpha \beta -3 \beta^2 +6x +2y +34z,\\& 24 c_2(2)-90 \alpha c_2(1)-24 \alpha^2 c_2(0)+276 \alpha^3=220\alpha^3 -33\alpha^2 \beta -33\alpha \beta^2-4\beta^3 + 60 \alpha x -6\beta x \\&\kern 18.5 em + 30 \alpha y+ 22 \beta y +414 \alpha z + 22\beta z.
\end{align*}

Except for the classes $x$ and $y$, everything in the two identities is known in terms of (\ref{5generators}). Recall that there is no relation among the tautological generators in $H^{*\leq 6}(M_{4,1},\mathbb{Q})$, we obtain the expressions for $x$ and $y$ as we want:
\begin{align*}
x=&\;4 c_3(0) -\frac{1}{8} c_1(2)+4 c_2(0)^2- 8 c_0(2) c_2(0) +  \frac{831}{256}c_0(2)^2,
    \\
    y =&\; 5 c_3(0) +\frac{1}{4} c_2(1) -\frac{39}{32} c_1(2) +\frac{7}{2} c_2(0)^2 -\frac{221}{32}c_0(2)c_2(0)+\frac{5423}{2048}c_0(2)^2.
\end{align*}

\smallskip

This completes the comparison between the tautological generators (\ref{5generators}) and the generators $\alpha, \beta, x, y, z$ in Theorem \ref{thm2.1}. We summarize our results in the following theorem; combined with Theorem \ref{thm2.1}, we are able to write the cohomology $H^*(M_{4,1}, \BQ)$ in terms of the generators (\ref{5generators}) and relations.

\begin{thm}\label{prop2.9}
The following identities hold in $H^{*}(M_{4,1}, \mathbb{Q})$:
\begin{align*}
c_0(2) &= \alpha,\; \;
c_2(0) = \frac{25}{32}\alpha+\frac{1}{4}\beta,\\
c_1(2) &= \frac{3}{16} \alpha^{2}+\frac{1}{4} \alpha \beta +x -y -z,\\
c_2(1) &= \frac{55}{128} \alpha^{2}+\frac{5}{16} \alpha \beta +\frac{3}{8} \beta^{2} -\frac{3}{4} x -\frac{1}{4} y -\frac{17}{4} z,\\
c_3(0) &= \frac{75}{512} \alpha^{2}+\frac{15}{128} \alpha\beta  -\frac{1}{16} \beta^{2}+\frac{9}{32} x -\frac{1}{32} y -\frac{1}{32} z.
\end{align*}
\end{thm}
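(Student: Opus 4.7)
The plan is to invert a dictionary expressing each of $\alpha, \beta, x, y, z$ as a polynomial in the five tautological generators (\ref{5generators}), and then solve linearly for $c_0(2), c_2(0), c_1(2), c_2(1), c_3(0)$. The overall structure runs through several Grothendieck--Riemann--Roch (GRR) computations, an application of the Porteous formula, and a comparison of two expressions for the total Chern class $c(\mathcal{T}_M)$; each equation of the dictionary is linear in the unknowns we want, so the final step is routine linear algebra.

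First, $\alpha = c_0(2)$ is immediate since both classes pull back the hyperplane class under $h: M_{4,1}\to \mathbb{P}^{14}$. For $\beta$, I would combine Proposition \ref{prop2.2}(i) with the identity $D = -3\alpha + L$ of \cite{CC15} to reduce the problem to computing the determinant line bundle $D = \lambda(-4\mathcal{O}_{\mathbb{P}^2} + \mathcal{O}_H)$. Applying GRR to $q_*\mathbb{F}$ with the normalization class $\delta_0$ of Lemma \ref{lemma2.5}, the auxiliary class $\gamma$ cancels out of the combination defining $D$, yielding $\beta$ as a linear combination of $c_0(2)$ and $c_2(0)$. For $z$, Proposition \ref{prop2.6} identifies $z = [\mathcal{C}_4]$ as the degeneracy locus where a two-term resolution $\phi: K^0\to K^1$ of $Rq_*\mathbb{F}$ has rank $\leq e-2$; Porteous' formula then expresses $z$ through $\mathrm{ch}_1(q_*\mathbb{F})^2$ and $\mathrm{ch}_2(q_*\mathbb{F})$, both accessible from the GRR expansion of $\mathrm{ch}(q_*\mathbb{F})$ already computed for $\beta$.

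To handle $x$ and $y$, I would exploit the total Chern class of the tangent bundle. Stability of $\CF$ kills $\mathrm{Hom}$ (up to scalars) and $\mathrm{Ext}^2$, giving $\mathcal{T}_M = -Rq_*R\mathcal{H} \kern -1.2pt \mathit{om}(\mathbb{F},\mathbb{F}) + \mathcal{O}_{M_{4,1}}$ in $K$-theory; GRR then produces an expression for $c(\mathcal{T}_M)$ as a polynomial in the tautological classes $c_k(j)$, one which unfortunately involves the non-generator $c_2(2)$. To eliminate $c_2(2)$, I would apply the relation formula \cite[Proposition 2.6]{PS} (modified by the normalization $\delta_0$) with $\ell=5$ and $n=1,2,3$. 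The subtle point is that the resulting relation has $\gamma$-dependent pieces; replacing $\mathbb{F}$ by $\mathbb{F}\otimes p^*\mathcal{O}_{\mathbb{P}^2}(m)$ shifts $\gamma$ by a multiple of $c_0(2)$, and freeness of the tautological generators in $H^{*\leq 6}$ (Theorem \ref{thm0.2}) then forces all $\gamma$-dependent coefficients to vanish individually. Three linearly independent relations emerge in $H^6(M_{4,1},\mathbb{Q})$, which are enough to solve for $c_2(2)$ (and along the way $c_3(1), c_4(0)$) in terms of (\ref{5generators}).

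Finally, comparing the resulting expression for $c(\mathcal{T}_M)$ with Chung--Moon's formula in Proposition \ref{prop2.8} in algebraic degrees $2$ and $3$ gives two identities in which only $x$ and $y$ are unknown; invoking freeness in $H^{*\leq 6}$ once more, these identities determine $x$ and $y$ uniquely as polynomials in (\ref{5generators}). Having $\alpha, \beta, x, y, z$ explicitly as polynomials in the tautological generators, the asserted identities follow by inverting this (triangular, in terms of algebraic degree) linear system. The main obstacle is not conceptual but bookkeeping: organizing the GRR expansion of $\mathrm{ch}(q_*(\mathbb{F}^\vee\otimes^L\mathbb{F})\cdot\mathrm{td}(\mathbb{P}^2))$ and tracking the rational coefficients through the elimination of $c_2(2)$ is where the heaviest computation lives, while the final inversion is routine.
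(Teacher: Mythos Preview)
Your proposal is correct and follows essentially the same route as the paper: the identifications of $\alpha$, $\beta$, and $z$ via the determinant line bundle computation and Porteous' formula, the GRR computation of $c(\mathcal{T}_M)$, the elimination of $c_2(2)$ using the $\ell=5$ relations from \cite{PS} together with the $\gamma$-shift/freeness argument, and the comparison with Chung--Moon's expression in degrees $2$ and $3$ to solve for $x$ and $y$ are exactly the steps carried out in Sections \ref{Sec2.1.1}--\ref{Sec2.3.4}.
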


\smallskip

\subsection{Proof of the main results for degree $4$.} 

In this section we prove Theorem \ref{Thm0.6} and Theorem \ref{thm0.7} for $d=4$. Consider now the morphism $h: M_{4,1} \to \BP^{14}$. By Corollary \ref{cor1.3}, the perverse filtration can be characterized using the class $\xi = c_0(2)$ as follows:
\begin{equation}\label{perv0}
P_k H^m(M_{4,1},\mathbb{Q})=\sum_{i\geq 1} \big( \mathrm{Ker} (\xi^{14+k-m+i}) \cap \mathrm{Im} (\xi^{i-1})\big) \cap H^m(M_{4,1}, \mathbb{Q}).
\end{equation}
The equation (\ref{perv0}), combined with the ring structure of $H^*(M_{4,1}, \BQ)$ given by Theorem \ref{thm2.1} and Theorem \ref{prop2.9}, provides a complete description of the perverse filtration in terms of the generators in Theorem \ref{thm2.1} or the five tautological generators (\ref{5generators}). In particular, using a computer, we are able to check\footnote{The computation is conducted via the software \textsc{Macaulay2.} Explicit descriptions of the perverse filtration for $M_{4,1}$ can be found on the second author's website: \href{https://github.com/Weite-Pi/weitepi.github.io/blob/6e08a17f32af07e6944eba88ea105e09d448aa64/PM41.pdf}{\texttt{https://github.com/Weite-Pi/weitepi.github.io}.}} 
\[
P_kH^{* \leq 4} (M_{4,1}, \BQ) = C_kH^{*\leq 4}(M_{4,1}, \BQ),
\]
and calculate the dimensions of the graded piece
\[
\dim \mathrm{Gr}_k^P H^m(M_{4,1}, \BQ), \quad \forall k,m\in \BZ.
\]
The refined invariants
\[
n_4^{i,j} = \dim \mathrm{Gr}^P_iH^{i+j}(M_{4,1}, \BQ)
\]
are matched with the formula $\widetilde{F}_{\mathrm{BPS},4}(q,t)$ in Section \ref{Section 4} obtained via the Nekrasov partition function. This proves both Theorem \ref{Thm0.6} and Theorem \ref{thm0.7}.

In the following, we provide more details on checking $P=C$ for the reader's convenience.
The filtration $P_\bullet H^m(M_{4,1}, \mathbb{Q})$ is concentrated in perverse degrees $[0,m]$, and the fundamental class $\mathbf{1}_{M_{4,1}}$ lies in $P_0 H^0(M_{4,1}, \mathbb{Q})$. Since the class $c_0(2)$ is the pull-back of a hyperplane class on the base $\mathbb{P}^{14}$, and $c_2(0)$ is a relative ample class \cite{PS}, we obtain immediately from relative Hard Lefschetz that:

\begin{center}
\begin{tabular}{||c c c c c c||} 
 \hline
   & $c_0(2)$ & $c_2(0)$ & $c_0(2)^2$ & $c_0(2)c_2(0)$ & $c_2(0)^2$\\ 
 \hline
 Perversity & 0 & 2 & 0 & 2 & 4  \\
 \hline
\end{tabular}
\end{center}

Furthermore, we use (\ref{perv0}), Theorem \ref{thm2.1}, and Theorem \ref{prop2.9} to check that the perversity of the generators in $H^4(M_{4,1}, \mathbb{Q})$ are as expected:

\begin{center}
\begin{tabular}{||c c c c ||} 
 \hline
   & $c_1(2)$ & $c_2(1)$ & $c_3(0)$ \\ 
 \hline
 Perversity & 1 & 2 & 3  \\
 \hline
\end{tabular}
\end{center}

Finally, we check as above that any $\BQ$-linear combination of the classes
\[
c_0(2)c_2(0), ~ c_2(1) \in H^4(M_{4,1}, \BQ)
\]
has perversity $2$. In particular, we have
\[
\mathrm{Span}_\mathbb{Q}\langle c_0(2) c_2(0), c_2(1) \rangle \cap P_1 H^4(M_{4,1}, \mathbb{Q})=\{0\}.
\]
This guarantees the identity
\[
P_kH^{* \leq 4} (M_{4,1}, \BQ) = C_kH^{*\leq 4}(M_{4,1}, \BQ)
\]
which completes the proof. \qed


\begin{rmk}\label{rmk2.11}
One can verify also that the class $c_2(0)c_3(0)$ has perversity $3$, so we see that the bound $2d-4$ in Conjecture \ref{P=C} is optimal for $d=4$.
\end{rmk}



\section{Pandharipande--Thomas theory, Nekrasov partition functions, and combinatorial BPS invariants}
\label{Section 3}

\subsection{Overview}

In this section, we introduce the \emph{combinatorial} BPS invariants
\[
\widetilde{F}_{d,\mathrm{BPS}} (q,t) = \sum_{i,j} \widetilde{n}_d^{i,j} q^it^j.
\]
They are defined by the Nekrasov partition function (\ref{comb_BPS}), and they refine the (standard) Pandharipande--Thomas (PT) invariants \cite{PT2,PT1,PT3} for the local $\BP^2$.

In contrast to $n_d^{i,j}$ defined via the perverse filtration, the combinatorial invariants $\widetilde{n}_d^{i,j}$ are very easy to compute. For a fixed $d$, the generating function $\widetilde{F}_{d,\mathrm{BPS}}(q,t)$ is obtained by a calculation in only finitely many terms. Nevertheless, these two types of invariants are expected to coincide by string-theoretic considerations.


\begin{conj}[\emph{c.f.} \cite{NO}]\label{conj0}
We have
\[
\widetilde{n}_{d}^{i,j} = n_d^{i,j}.
\]
\end{conj}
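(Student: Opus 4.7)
The plan is to route the equality $\widetilde{n}_d^{i,j} = n_d^{i,j}$ through refined Pandharipande--Thomas (PT) theory of $X = \mathrm{Tot}(K_{\BP^2})$. By construction, the combinatorial invariants $\widetilde{n}_d^{i,j}$ arise from an equivariant index on Nakajima quiver varieties, which the Nekrasov--Okounkov M-theoretic prescription \cite{NO} identifies with refined PT invariants of $X$ in curve class $dH$. The identification of $\widetilde{F}_{d,\mathrm{BPS}}(q,t)$ with the refined PT partition function is essentially tautological, requiring only that one unpack the definitions of Section \ref{Section 3} and match $(q,t)$ with the Calabi--Yau equivariant weights. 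Meanwhile, on the sheaf side, the Maulik--Toda framework \cite{MT}, together with the $\chi$-independence of \cite{MS_GT}, is precisely the statement that the perverse filtration on $M_{d,\chi}$ encodes the Gopakumar--Vafa BPS invariants of $X$ in the same curve class. Thus Conjecture \ref{conj0} reduces to a \emph{refined PT/BPS correspondence} for local $\BP^2$ at the level of cohomology, relating stable pairs on $X$ to 1-dimensional stable sheaves.

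I would first verify the unrefined specialization, sending one of the variables to its Euler-characteristic limit on both sides and invoking the known GV/PT correspondence for local toric Calabi--Yau 3-folds, as supplied by Maulik--Pandharipande--Thomas and the topological vertex formalism. This handles the numerical part. The refinement step would then upgrade the numerical identity to a bigraded one by a motivic or Hodge-theoretic enhancement of the Joyce--Song/Toda wall-crossing from stable pairs to 1-dimensional sheaves: one must track how the natural weight structure on the refined PT side transports to the perverse filtration arising from the Hilbert--Chow fibration $h: M_{d,\chi}\to \BP H^0(\BP^2,\CO_{\BP^2}(d))$, with the Nekrasov bigrading in $(q,t)$ corresponding to the $(P,\deg)$-bigrading on $\bigoplus_{i,j}\mathrm{Gr}^P_i H^{i+j}(M_{d,\chi},\BQ)$.

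The main obstacle is precisely this second step: a cohomologically refined PT/BPS correspondence for a non-compact Calabi--Yau $3$-fold is not available in the generality required. For fixed low $d$, the present paper circumvents the obstacle by computing both sides explicitly --- on the sheaf side via the Chung--Moon presentation of $H^*(M_{d,1},\BQ)$, the translation dictionary of Theorem \ref{prop2.9}, and the ample-class recipe of Corollary \ref{cor1.3}; on the combinatorial side via direct evaluation of the Nekrasov partition function --- and comparing coefficient by coefficient. This is what is carried out for $d=3,4$ in Sections \ref{Sec1} and \ref{Sec2}. For general $d$, Conjecture \ref{P=C} is a crucial intermediate: once the perverse filtration is matched with the Chern filtration in the free range, the tautological generators (\ref{generator}) become the natural algebraic carriers of the Chern characters that appear in the Nekrasov localization sum, which is what makes Conjecture \ref{conj1} (hence the free-range part of Conjecture \ref{conj0}) accessible via the product formula $H(q,t)$. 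A uniform proof for all $d$ would plausibly combine $P=C$ with tautological relations on $M_{d,\chi}$ to control the contribution of the non-free range and to promote the free-range identity to the full bigraded equality.
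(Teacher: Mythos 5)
The statement you were asked to prove is Conjecture \ref{conj0}, which the paper states \emph{as a conjecture} precisely because no proof is known; the paper only verifies it for $d=3,4$ (Theorem \ref{thm0.7}) by computing both sides explicitly and comparing. Your proposal is an outline of the intended conceptual route (a refined PT/BPS correspondence transporting the Nekrasov bigrading to the perverse filtration on $M_{d,\chi}$) together with an honest admission that the key refinement step is unavailable. As a description of the state of the art this is reasonable, but it is not a proof, and beyond the gap you acknowledge it contains a genuine error.

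The error is in your ``first step.'' You claim the unrefined specialization is handled by invoking ``the known GV/PT correspondence for local toric Calabi--Yau 3-folds'' via Maulik--Pandharipande--Thomas and the topological vertex. What is known for toric 3-folds is the GW/DT/PT correspondence, i.e., equalities among curve-counting theories. The invariant $n_d^{i,j}$, however, is defined through the perverse filtration on $H^*(M_{d,\chi},\BQ)$ for the Hilbert--Chow map, following \cite{HST,KL,MT}; the assertion that the resulting unrefined BPS invariants reproduce the PT invariants of local $\BP^2$ is itself the (open) Maulik--Toda proposal in this setting, not a theorem you can cite. The paper makes this explicit: its $d=3,4$ computation is said to ``yield the Gopakumar--Vafa/Gromov--Witten correspondence for $d=3,4$,'' meaning even the Euler-characteristic shadow of Conjecture \ref{conj0} was not previously known. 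So your proposed reduction collapses already at the numerical level, before the refinement issue arises. The remaining substance of your proposal --- explicit verification for $d=3,4$ using the Chung--Moon presentation, Theorem \ref{prop2.9}, and Corollary \ref{cor1.3} on the sheaf side against the Nekrasov partition function on the combinatorial side --- is exactly what the paper carries out, but that establishes only Theorem \ref{thm0.7}, not the conjecture itself.
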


Conjecture \ref{conj0} is the main source for us to make predictions on the structure of the refined BPS invariants $n^{i,j}_d$. For example, using a computer we are able to check Conjecture \ref{conj1} for $\widetilde{n}_d^{i,j}$ in all degrees $d\leq 14$. 

\smallskip

Finally, we list in Section \ref{Section 4} the formulas \[
\widetilde{F}_{d,\mathrm{BPS}}(q,t), \quad \textrm{for } d=3, 4 
\]
which are needed to match the invariants (\ref{BPS1}) obtained from the perverse filtration. These formulas were also obtained in \cite{CKK}.

\subsection{PT theory of local $\BP^2$}

We first recall the (unrefined) PT invariants for $X =\mathrm{Tot}(K_{\mathbb{P}^2})$. We consider the moduli space $\mathrm{PT}(X,d,n)$ of stable pairs $(\CF,s)$ where $\CF$ is a pure 1-dimensional sheaf on $X$ with
\[
[\mathrm{supp}(\CF)] = dH \in H_2(\BP^2, \BZ) =  H_2(X,\BZ), \quad \chi(\CF) = n
\]
and a section $s: \mathcal{O}_X \to \mathcal{F}$ satisfying that $\dim \mathrm{coker} \ s = 0$. Although $X$ is not projective itself, the moduli spaces $\mathrm{PT}(X,d,n)$ are projective. The PT invariants \cite{PT2} are defined to be the degrees of the virtual cycles
\[
\mathrm{PT}_{n,d} : = \int_{[\mathrm{PT}(X,d,n)]^\mathrm{vir}}1 \in \BZ.
\]
They form a generating function
\begin{equation}\label{PT0}
Z^{\mathrm{ur}}_{\mathrm{PT}}(X) : = \sum_{d} Q^d
\sum_{n \in \mathbb{Z}} \mathrm{PT}_{n,d} \cdot (-z)^n.  
\end{equation}
Here the superscript $``\mathrm{ur}"$ stands for {unrefined} invariants.

\subsection{Refined PT invariants}

In order to refine (\ref{PT0}), we consider the 3-dimensional torus $\mathbb{C}^\times_{q_1, q_2, q_3}$ acting on ${X}$ such that
\[
\mathbb{C}^\times_{q_1, q_2, q_3} = \left\{
\begin{pmatrix}
q_1^{-1} &  &\\
& q_2^{-1} & \\
& & q_3^{-1}
\end{pmatrix}
\right\} \subset \mathrm{Aut}\left(H^0(\BP^2, \mathcal{O}_{\mathbb{P}^2}(1))\right)
\]
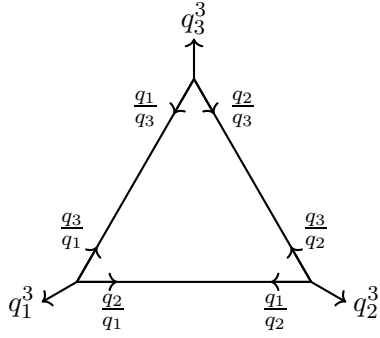
\begin{figure}[h]
    \centering
    \begin{tikzpicture}[scale=0.027]
    \draw [thick] (-57.7,0)--(57.7,0)--(0,100)--cycle;
    \draw [thick, ->] (-57.7,0) -- (-37.7,0);
    \draw [thick, ->] (57.7,0) -- (37.7,0);
    \draw [thick, ->] (-57.7,0) -- (-47.7, 17.3);
    \draw [thick, ->] (57.7,0) -- (47.7, 17.3);
    \draw [thick, ->] (0,100) -- (-10,82.7);
    \draw [thick, ->] (0,100) -- (10,82.7);
    \draw [thick, ->] (0,100) -- (0,120);
    \draw [thick, ->] (-57.7,0) -- (-75, -10);
    \draw [thick, ->] (57.7,0) -- (75, -10);
    \node at (-85, -10) {$q_1^3$};
    \node at (85, -10) {$q_2^3$};
    \node at (0, 130) {$q_3^3$};

    \node at (-40, -15) {$\frac{q_2}{q_1}$};
    \node at (40, -15) {$\frac{q_1}{q_2}$};
    \node at (-60, 25) {$\frac{q_3}{q_1}$};
    \node at (60, 25) {$\frac{q_3}{q_2}$};
    \node at (-24, 85) {$\frac{q_1}{q_3}$};
    \node at (24, 85) {$\frac{q_2}{q_3}$};
    \end{tikzpicture}
    \caption{Toric diagram of $\BP^2$. Vertices denote the fixed points, and the edges denote the invariant rational curves. Arrows at the vertices denote the weights of the tangent spaces.}
    \label{ToricP2}
\end{figure}

The perfect obstruction theory on $\mathrm{PT}(X,d,n)$ yields a virtual structure sheaf $\mathcal{O}^\mathrm{vir}$ and its symmetrized version 
\[
\widehat{\mathcal{O}}^\mathrm{vir} := \mathcal{O}^\mathrm{vir} \otimes \left(\mathcal{K}^\mathrm{vir}\right)^{\frac{1}{2}},
\]
as explained in \cite{OkPCMI}. The virtual index with respect to $\widehat{\mathcal{O}}^\mathrm{vir}$ coincides with the invariant $\mathrm{PT}_{n,d}$:
\begin{equation}\label{PT=chi}
\chi(\mathrm{PT}(X, d, n), \widehat{\mathcal{O}}^\mathrm{vir}) = \mathrm{PT}_{n,d} \in \BZ;
\end{equation}
see \cite[(7.7)]{CKK} and \cite{NO}. Then the \emph{equivariant} virtual index with respect to the torus action naturally refines $\mathrm{PT}_{n,d}$. More precisely, in view of (\ref{PT=chi}), we express (\ref{PT0}) as the $K$-theoretic PT generating function of virtual indices:
\begin{equation}\label{KPT}
Z^{\mathrm{ur}}_{\mathrm{PT}}({X}) = \sum_{d} Q^\beta 
\sum_{n \in \mathbb{Z}} \chi(\mathrm{PT}(X, d, n), \widehat{\mathcal{O}}^\mathrm{vir})\cdot (-z)^n. 
\end{equation}
Using the torus action $\mathbb{C}^\times_{q_1, q_2, q_3}$ on $X$ and localization, the equivariant version $Z^{\mathrm{equiv}}_{\mathrm{PT}}(X)$ of the $K$-theoretic generating function (\ref{KPT}) is expressed as the contraction of three equivariant PT vertex functions along the edges of the toric diagram. The localization formula \cite{OkT, OkPCMI} relies only on the description of the tangent space to a pair $(\mathcal{F},s)$ as 
\[
\text{Tangent space at $(\mathcal{F},s)$} = \chi(\mathcal{F}) + \chi(\mathcal{F},\mathcal{O}) - \chi(\mathcal{F},\mathcal{F}).
\]
It turns out that each coefficient in front of $Q^d$ converges to a rational function in the variable $z$. The coefficients of the expansion of the plethystic logarithm of $Z^{\mathrm{equiv}}_{\mathrm{PT}}(X)$ are the Gopakumar--Vafa invariants $\mathrm{GV}_d \in \BZ[z, \kappa^{\frac{1}{2}}]$:
\[
Z^{\mathrm{equiv}}_{\mathrm{PT}}(X) = \mathsf{S}^\bullet \left(
\sum_{d \geq 1} \frac{\mathrm{GV}_d}{(1-z \sqrt \kappa)(1-\frac{z}{\sqrt \kappa})}\cdot Q^d
\right), \quad  \kappa = q_1q_2q_3.
\]
Each $\mathrm{GV}_d$ is equivalent to $\widetilde{F}_{d,\mathrm{BPS}}(q,t)$ up to a change of variables (see (\ref{comb_BPS}) below), and we list the first few expressions of $\mathrm{GV}_d$:
\begin{align*}
\mathrm{GV}_1 &= z \cdot (\kappa +1 + \kappa^{-1}),\\
\mathrm{GV}_2 &= 
-z\cdot (\kappa^{\frac{5}{2}}+\kappa^{\frac{3}{2}}+\kappa^{\frac{1}{2}}+\kappa^{-\frac{1}{2}}+\kappa^{-\frac{3}{2}}+\kappa^{-\frac{5}{2}}),\\
\mathrm{GV}_3 &= z\cdot (\kappa^{3}+\kappa^{2}+\kappa+1+\kappa^{-1}+\kappa^{-2}+\kappa^{-3})\\&\;+ (z^2+1)\cdot (\kappa^{\frac{9}{2}}+\kappa^{\frac{7}{2}}+\kappa^{\frac{5}{2}}+\kappa^{\frac{3}{2}}+\kappa^{\frac{1}{2}}+\kappa^{-\frac{1}{2}}+\kappa^{-\frac{3}{2}}+\kappa^{-\frac{5}{2}}+\kappa^{-\frac{7}{2}}+\kappa^{-\frac{9}{2}}).
\end{align*}
Indeed, it was proven in \cite[Theorem 1]{NO} that $Z^{\mathrm{equiv}}_{\mathrm{PT}}(X)$ depends on the equivariant parameters $q_i$ with $i=1,2,3$ only as a function of the weight  of the Calabi--Yau 3-form $\kappa = q_1 q_2 q_3$. Thus it can be evaluated in the refined limit $q_i \to 0, \infty$ when  $\kappa = \mathrm{const}$.

The refined limit of the vertex function is particularly simple when there is a {\it preferred} direction. Briefly speaking, the preferred direction is a weight of $\BC^3$ which goes to $0$ or $\infty$ much more slowly than the other two weights. In the case of the Donaldson--Thomas vertex it was  investigated in \cite{NO}, and for the 2-legged PT vertex  in \cite{KOO}.

\smallskip

Unfortunately, for local $\mathbb{P}^2$ we need to consider the 3-legged PT vertex, and it is not possible to make a limit of equivariant parameters such that each of the three vertices has a preferred direction. For example, if we take a limit
\[
q_1 \approx q_2 \to 0, \ q_3 \to \infty, \ q_1 q_2 q_3 = \mathrm{const},
\]
then the bottom two vertices have preferred directions along the edge joining them, while the third vertex does not have one.
Thus, we can not reduce $Z^{\mathrm{equiv}}_{\mathrm{PT}}(X)$ completely to the refined topological vertex \cite{IQV}, and it requires a new type of vertex as explained in \cite{IQ}.

Another approach is to relate, using the flop relation, $Z^{\mathrm{equiv}}_{\mathrm{PT}}(X)$ with the equivariant PT invariants of the local $A_1$-surface, for which there is a limit such that each vertex has a preferred direction. As a consequence, such calculation relates $Z^{\mathrm{equiv}}_{\mathrm{PT}}(X)$ to the Nekrasov partition function associated with the rank 2 instanton moduli space with an insertion of the tautological line bundle $\mathcal{O}(1)$ given by (\ref{V}). More generally, equivariant PT counts for the local $A_n$-surface are related to the Euler characteristic of the sheaf $\mathcal{O}(n)$ on the instanton moduli space. In this case, the two K\"ahler parameters for the PT moduli space are related with one K\"ahler and one equivariant parameters of the framing for the instanton moduli space.
By this approach, we may recover $\mathrm{GV}_d$ from the instanton invariants $P_d$ defined later in Section \ref{Sec3.4}:
\begin{equation}\label{Pd}
\mathrm{GV}_d (z, \kappa) = \mathrm{monomial \ prefactor} \cdot P_d(z \sqrt{\kappa}, \frac{z}{\sqrt{\kappa}}).
\end{equation}

\subsection{Nekrasov partition functions}\label{Sec3.4}

Let $M(r,n)$ be the  Nakajima  variety \cite{NakajimaBook} corresponding to the quiver as in Figure \ref{Nak}.
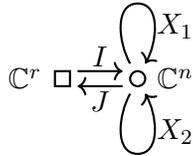
\begin{figure}[h]
    \centering
    \begin{tikzpicture}
    \draw [thick] (-0.1,-0.1)--(-0.1,0.1)--(0.1,0.1)--(0.1,-0.1)--cycle;
    \draw [thick] (1,0) circle (0.1);
    \draw [->, thick, rotate around = {90: (1,0)}] (1.2,0.1) to [out=30, in=90] (2,0) to [out=270, in=330] (1.2,-0.1);
    \draw [<-, thick, rotate around = {-90: (1,0)}] (1.2,0.1) to [out=30, in=90] (2,0) to [out=270, in=330] (1.2,-0.1);
    \draw [->, thick] (0.2,0.1) -- (0.8,0.1);
    \draw [<-, thick] (0.2,-0.1) -- (0.8,-0.1);
    \node at (-0.5,0) {$\mathbb{C}^r$};
    \node at (1.5,0) {$\mathbb{C}^n$};
    \node at (0.5,0.3) {$I$};
    \node at (0.5,-0.3) {$J$};
    \node at (1.5,0.7) {$X_1$};
    \node at (1.5,-0.7) {$X_2$};
    \end{tikzpicture}
    \caption{The graph consists of two vertices, $\BC^n$ and $\BC^r$. The latter is referred to as the framing vertex.
    The variety is defined as the GIT quotient
    $M(r,n) := \{X_1, X_2, I,J \mid [X_1, X_2] + I\cdot J = 0\}\sslash \mathrm{GL}(n)$}
    \label{Nak}
\end{figure}

In the physics literature $M(r,n)$ is known as the instanton moduli space. It is isomorphic to the moduli space of rank $r$ framed torsion-free sheaves $\mathcal{F}$ on $\mathbb{P}^2$, such that $c_2(\mathcal{F}) = n$,  with a trivialization over $\mathbb{P}_\infty^1 \subset \mathbb{P}^2$:
\[
\mathcal{F}\Big|_{\mathbb{P}^1} \cong \mathcal{O}_{\BP^1}^{\oplus r}.
\] 
Note that the $\BP^2$ here has nothing to do with the $\BP^2$ for the PT theory; they are complement to each other from the M-theoretic point of view.

\smallskip

There is an action of the torus
\[
\mathsf{T} = \mathbb{C}^\times_{t_1, t_2} \times \mathbb{C}^\times_{u_1,...,u_n},
\]
on $M(r,n)$, where $\mathbb{C}^\times_{t_1,t_2}$ acts
as
\[
X_i \mapsto t_i X_i, \ \ I \mapsto I, \ \ J \mapsto t_1 t_2 J,
\]
and 
\[\mathbb{C}^\times_{u_1,...,u_r} = \left\{
\begin{pmatrix}
u_1 &  &\\
& \ddots & \\
& & u_r
\end{pmatrix}
\right\} \subset \mathrm{Aut}(\mathbb{C}^r).
\]

\smallskip

The Nakajima variety $M(r,n)$ is smooth and of dimension $2rn$.
The vertex $\mathbb{C}^n$ in the quiver gives rise to the tautological vector bundle $\mathcal{V}$ of rank $n$ over $M(r,n)$, and the framing vertex gives rise to the trivial rank $r$ vector bundle $\mathcal{W}$ with the character of the fiber $u_1+u_2+\cdots+u_r$.  The $K$-theory class of the tangent bundle to $M(r,n)$ can be written as
\begin{equation}\label{tanHilb}
\mathcal{T} = \Hom(\mathcal{W},\mathcal{V}) + t_1t_2 \Hom(\mathcal{V},\mathcal{W})-(1-t_1)(1-t_2) \Hom(\mathcal{V},\mathcal{V}).
\end{equation}
The Picard group of $M(r,n)$ is of rank 1 generated by the determinant bundle
\begin{equation}\label{V}
\mathcal{O}(1): = \det \mathcal{V}.
\end{equation}
See \cite{NakajimaBook} for details on these facts. Finally, the Nekrasov partition function is defined as the the generating series for the equivariant Euler characteristics:
\[
Z(\ell) = \sum_{n\geq 0} z^n \chi_\mathsf{T} (M(r,n), \mathcal{O}(\ell)) \in  \mathbb{Q}(t_1,t_2,u_1,...,u_r)[[z]].
\]
The function $Z(\ell)$ can be computed explicitly by equivariant localization. The set of fixed points $M(r,n)^\mathsf{T}$ are identified with $r$-tuples of Young diagrams with $n$ boxes in total. The contribution of each fixed point can be easily computed using the formula (\ref{tanHilb}).

\smallskip

Now the PT invariants for $X = \mathrm{Tot}({K_{\mathbb{P}^2}})$ can be obtained as a certain limit for $Z(1)$ with $r=2$, as we explain in the following. The partition function $Z(\ell)$ depends on the four variables $z,t_1,t_2,{u_2}/{u_1}$. Setting $u={u_2}/{u_1}$,
the plethystic logarithm of $Z(1)$ then determines the invariants $P_d(t_1,t_2)$:
\small{\[
Z(1) = {\mathsf{S}}^\bullet \left(
\frac{t_1^2 t_2^2}{(1-t_1)(1-t_2)}u\cdot z
+
\sum_{d \geq 1} z^d \left( (-1)^{d-1} (t_1t_2)^{\frac{-d(d-3)}{2}}\cdot \frac{P_d(t_1,t_2)}{(1-t_1)(1-t_2)} u^{2d} + O(u^{2d+1})
\right)
\right).
\]}
\normalsize

Each $P_d(t_1,t_2)$, mentioned earlier in (\ref{Pd}), is a  symmetric polynomial in the variables $t_1, t_2$. We define the combinatorial BPS invariants by
\begin{equation}\label{comb_BPS}
\widetilde{F}_{d,\mathrm{BPS}}(q,t) := \left. \left( t_1^{-\frac{(d-1)(d-2)}{2}} \cdot P_d(t_1, t_2)
\right) \right|_{t_1 = \frac{t}{q}, t_2 = tq} \in 1 + t^2 \mathbb{Z}[t,q].
\end{equation}

\begin{rmk}
We see from the formulas of $Z(1)$ and $P_d$ that the generating series $\widetilde{F}_{d,\mathrm{BPS}}(q,t)$ can be evaluated by a calculation of finitely many terms. We also note that $\widetilde{F}_{d,\mathrm{BPS}}(q,t)$ is equivalent to the Gopakumar--Vafa invariants $\mathrm{GV}_d$ via $P_d(t_1,t_2)$. Therefore it recovers and refines the PT invariants (\ref{PT0}).
\end{rmk}

\subsection{Numerical data}\label{Section 4}
We list in this last section the combinatorial BPS invariants for $d=3,4$, which are used in the proof of Theorem \ref{thm0.7}. The terms that coincide with the expansion of $H(q,t)$, \textit{c.f.} the remark after Conjecture \ref{conj0}, are enclosed in a square bracket.

{
\small

\vspace{0pt}
\begin{align*}
\kern -40pt \widetilde{F}_{3,\mathrm{BPS}} = & \; {\boldsymbol{\left[{ 1+\left(q^{2}+t^{2}\right)}\right]}}+\left(q^{2} t^{2}+q t^{3}+t^{4}\right) \\&+\left(q^{2} t^{4}+q t^{5}+t^{6}\right) +\left(q^{2} t^{6}+q t^{7}+t^{8}\right) 
\\&+
\left(q^{2} t^{8}+q t^{9}+t^{10}\right)
+\left(q^{2} t^{10}+q t^{11}+t^{12}\right)\\& +\left(q^{2} t^{12}+q t^{13}+t^{14}\right) 
+\left(q^{2} t^{14}+q t^{15}+t^{16}\right)\\ &+\left(q^{2} t^{16}+t^{18}\right) +q^{2} t^{18}.
\end{align*}

\vspace{-10pt}
\begin{align*}
\kern 13pt \widetilde{F}_{4,\mathrm{BPS}} = & \;{\boldsymbol{ \left[{1+\left(q^{2}+t^{2}\right) +\left(q^{4}+q^{3} t +2 q^{2} t^{2}+q t^{3}+t^{4}\right)}\right]}}\\&+\left(q^{6}+2 q^{4} t^{2}+2 q^{3} t^{3}+3 q^{2} t^{4}+q t^{5}+t^{6}\right)
\\&+\left(q^{6} t^{2}+q^{5} t^{3}+3 q^{4} t^{4}+3 q^{3} t^{5}+4 q^{2} t^{6}+q t^{7}+t^{8}\right)\\&+\left(q^{6} t^{4}+q^{5} t^{5}+4 q^{4}t^{6}+3 q^{3} t^{7}+4 q^{2} t^{8}+qt^{9}+t^{10}\right)
\\&+\left(q^{6} t^{6}+q^{5} t^{7}+4 q^{4} t^{8}+4 q^{3} t^{9}+4 q^{2} t^{10}+q t^{11}+t^{12}\right)\\&+\left(q^{6} t^{8}+q^{5} t^{9}+4 q^{4} t^{10}+4 q^{3} t^{11}+4 q^{2} t^{12}+q t^{13}+t^{14}\right) \\&+
\left(q^{6} t^{10}+q^{5} t^{11}+4 q^{4} t^{12}+4 q^{3} t^{13}+4 q^{2} t^{14}+q t^{15}+t^{16}\right)\\& +\left(q^{6} t^{12}+q^{5} t^{13}+4 q^{4} t^{14}+4 q^{3} t^{15}+4 q^{2} t^{16}+q t^{17}+t^{18}\right)\\&
+\left(q^{6} t^{14}+q^{5} t^{15}+4 q^{4} t^{16}+4 q^{3} t^{17}+4 q^{2} t^{18}+q t^{19}+t^{20}\right)\\&+\left(q^{6} t^{16}+q^{5} t^{17}+4 q^{4} t^{18}+4 q^{3} t^{19}+4 q^{2} t^{20}+q t^{21}+t^{22}\right) \\&
+\left(q^{6} t^{18}+q^{5} t^{19}+4 q^{4} t^{20}+3 q^{3} t^{21}+4 q^{2} t^{22}+q t^{23}+t^{24}\right)\\&+\left(q^{6} t^{20}+q^{5} t^{21}+4 q^{4} t^{22}+3 q^{3} t^{23}+3 q^{2} t^{24}+q t^{25}+t^{26}\right)\\&
+\left(q^{6} t^{22}+q^{5} t^{23}+3 q^{4} t^{24}+2 q^{3} t^{25}+2 q^{2} t^{26}+t^{28}\right)\\&+\left(q^{6} t^{24}+q^{5} t^{25}+2 q^{4} t^{26}+q^{3} t^{27}+q^{2} t^{28}\right)\\&+\left(q^{6} t^{26}+q^{4} t^{28}\right) +q^{6} t^{28}.
\end{align*}
}

\newpage \noindent By the same method, we have checked the combinatorial version of Conjecture \ref{conj1}
\[
\widetilde{n}_d^{i,j} = [H(q,t)]^{i,j}, \quad  i+j \leq 2d-4
\]
for all degrees $d\leq 14$.

\end{document}